\newtheorem{theorem}{Theorem}
\newtheorem{lemma}{Lemma}
\newtheorem{proposition}{Proposition}
\newtheorem{problem}{Problem}
\theoremstyle{definition}
\newtheorem{definition}{Definition}
\theoremstyle{remark}
\newcommand{\cei}[1]{\left\lceil#1\right\rceil}
\newcommand{\dep}{\text{dep}}
\newcommand{\tdep}{\text{tdep}}
\title{Depth with respect to a family of convex sets}
\author[Mart\'inez-Sandoval]{Leonardo Mart\'inez-Sandoval}
\address{Dept. of Computer Science, Faculty of Natural Sciences, Ben-Gurion University of the Negev, Beer Sheva, 84105, Israel}
\email{leomtz@im.unam.mx}
\author[Tamam]{Roee Tamam}
\address{Dept. of Computer Science, Faculty of Natural Sciences, Ben-Gurion University of the Negev, Beer Sheva, 84105, Israel}
\email{roeeta@post.bgu.ac.il}
\begin{document}

\begin{abstract}
   	We propose a notion of depth with respect to a finite family $\mathcal{F}$ of convex sets in $\mathbb{R}^d$ which we call $\dep_\mathcal{F}$. We begin showing that $\dep_\mathcal{F}$ satisfies some expected properties for a measure of depth and that this definition is closely related to the notion of depth proposed by J. Tukey. We show that some properties of Tukey depth extend to $\dep_\mathcal{F}$ and we point out some key differences.
	
	We then focus on the following centerpoint-type question: what is the best depth $\alpha_{d,k}$ that we can guarantee under the hypothesis that the family $\mathcal{F}$ is $k$-intersecting? We show a key connection between this problem and a purely combinatorial problem on hitting sets. The relationship is useful in both directions. On the one hand, for values of $k$ close to $d$ the combinatorial interpretation gives a good bound for $k$. On the other hand, for low values of $k$ we can use the classic Rado's centerpoint theorem to get combinatorial results of independent interest. For intermediate values of $k$ we present a probabilistic framework to improve the bounds and illustrate its use in the case $k\approx d/2$. These results can be though of as an interpolation between Helly's theorem and Rado's centerpoint theorem.
	
	As an application of these results we find a Helly-type theorem for fractional hyperplane transversals. We also give an alternative and simpler proof for a transversal result of A. Holmsen.
	
	\keywords{Tukey depth \and geometric transversal theory\and centerpoint theorem \and hitting set \and Helly-type theorem}
\end{abstract}

\maketitle

\section{Introduction}

In this article we introduce and study the following notion of depth with respect to a family of convex sets in euclidean space.

\begin{definition}
\label{defDepth}
  Let $d$ be a positive integer. Let $\mathcal{F}$ be a finite family of convex sets and $p$ a point on $\mathbb{R}^d$. We define the depth of $p$ with respect to $\mathcal{F}$ as the smallest number of sets from $\mathcal{F}$ intersected by a closed halfspace that contains $p$. We denote this smallest number as $\dep_\mathcal{F}(p)$.
\end{definition}

In Section \ref{secProp} we study some properties of Definition \ref{defDepth} and we relate it to previous notions of depth in the literature. Specifically, we begin by proving that $\dep_\mathcal{F}$ indeed behaves as a depth function. We refer the reader to that section for the precise definitions.

\begin{theorem}
\label{thmBasic}
\begin{itemize}
	\item For any finite family $\mathcal{F}$ of bounded convex sets in $\mathbb{R}^d$ we have that
	
	\[
		\lim_{||x||\to \infty} \dep_\mathcal{F}(x)=0.
	\]
	\item For any family $\mathcal{F}$ of convex sets that is symmetric about the origin, the function $D:\mathbb{R}_{\geq 0}\to\mathbb{Z}$ given by $D(a)=\dep_{\mathcal{F}}(ax)$ is decreasing.
\end{itemize}
\end{theorem}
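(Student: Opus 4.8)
The plan is to treat the two parts separately, since they rest on different ideas: the first is a direct boundedness argument, while the second turns on an origin-symmetry reflection trick. For the first part I would exploit boundedness immediately. Since $\mathcal{F}$ is finite and each of its members is bounded, their union lies inside some ball $B(0,R)$. For a point $x$ with $\lVert x\rVert > R$, set $u = x/\lVert x\rVert$ and consider the closed halfspace $H=\{y : \langle y,u\rangle \ge \lVert x\rVert\}$. Then $x\in H$, while every $C\in\mathcal{F}$ satisfies $\langle y,u\rangle \le \lVert y\rVert \le R < \lVert x\rVert$ for all $y\in C$ by Cauchy--Schwarz, so $H$ meets no set of $\mathcal{F}$. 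Hence $\dep_\mathcal{F}(x)=0$ whenever $\lVert x\rVert > R$, and since the depth is a nonnegative integer this already yields $\lim_{\lVert x\rVert\to\infty}\dep_\mathcal{F}(x)=0$; indeed the function is eventually zero.

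For the second part I would prove the stronger pointwise statement that $0\le a_1\le a_2$ implies $\dep_\mathcal{F}(a_2 x)\le \dep_\mathcal{F}(a_1 x)$, from which monotonicity of $D$ is immediate. Write $p_i = a_i x$. Because $\mathcal{F}$ is finite, the number of sets met by a halfspace is an integer bounded below by $0$, so the minimum defining $\dep_\mathcal{F}(p_1)$ is attained by some closed halfspace $H=\{y:\langle y,u\rangle\ge c\}$ with $p_1\in H$. The idea is to produce from $H$ a closed halfspace containing the farther point $p_2$ that meets no more sets of $\mathcal{F}$ than $H$ does, splitting according to the sign of $\langle x,u\rangle$. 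If $\langle x,u\rangle\ge 0$, then $\langle p_2,u\rangle = a_2\langle x,u\rangle \ge a_1\langle x,u\rangle = \langle p_1,u\rangle \ge c$, so $p_2\in H$ and $H$ itself works.

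The interesting case is $\langle x,u\rangle<0$, and this is where the symmetry hypothesis enters. I would pass to the reflected halfspace $-H=\{y:\langle y,u\rangle\le -c\}$. The key observation is that reflection through the origin preserves the count: since $-H$ meets $C$ exactly when $H$ meets $-C$, and $C\mapsto -C$ is a bijection of the symmetric family $\mathcal{F}$, the halfspace $-H$ meets exactly as many members of $\mathcal{F}$ as $H$ does. It then remains to check that $p_2\in -H$; this follows because $p_1\in H$ together with $\langle x,u\rangle<0$ forces $c\le \langle p_1,u\rangle = a_1\langle x,u\rangle \le 0$, whence $\langle p_2,u\rangle = a_2\langle x,u\rangle \le 0 \le -c$. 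In either case $p_2$ lies in a halfspace meeting at most $\dep_\mathcal{F}(p_1)$ sets, giving the desired inequality.

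The routine ingredients are the boundedness estimate in the first part and the sign bookkeeping in the second. The one genuinely load-bearing step is the reflection argument: the entire monotonicity claim rests on the fact that origin-symmetry of $\mathcal{F}$ lets us swap a badly oriented optimal halfspace for its mirror image without changing how many sets it meets. I expect this to be the only place the hypothesis is used. It is also the natural point to flag that \emph{decreasing} must be read in the non-strict sense, since $\dep_\mathcal{F}$ is integer-valued and will typically be constant on long subintervals of the ray.
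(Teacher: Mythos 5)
Your proposal is correct. For the first bullet you argue exactly as the paper does (Proposition \ref{propLim}): enclose $\bigcup\mathcal{F}$ in a ball of radius $R$ and separate any $x$ with $\lVert x\rVert>R$ from that ball by a hyperplane, so the depth is eventually identically zero; you merely write the separating halfspace explicitly instead of invoking the separation theorem. For the second bullet, however, you take a genuinely different route. The paper (Proposition \ref{propDec}) deduces monotonicity from Theorem \ref{thmRCen}: by symmetry the $D(b)$-center is an origin-symmetric convex set containing $bx$, hence contains $0$, hence by convexity contains $ax$ for all $a\in[0,b]$. You instead argue directly on halfspaces: take an optimal closed halfspace $H$ at the inner point $a_1x$, and either $H$ already contains $a_2x$ (when $\langle x,u\rangle\ge 0$) or its reflection $-H$ does (when $\langle x,u\rangle<0$), with the origin-symmetry of $\mathcal{F}$ guaranteeing that $-H$ meets exactly as many members as $H$ via the bijection $C\mapsto -C$. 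Your sign bookkeeping in both cases checks out, and the attainment of the minimum is justified by finiteness of $\mathcal{F}$. What your approach buys is self-containment: it needs neither the convexity of the sets in $\mathcal{F}$ nor the plank machinery (Lemmas \ref{lemPlank} and \ref{lemInt}) behind Theorem \ref{thmRCen}. What the paper's approach buys is brevity, since Theorem \ref{thmRCen} is developed anyway and the monotonicity then falls out in two lines. Your closing remark that ``decreasing'' must be read non-strictly is accurate and consistent with the paper's usage.
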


We continue Section \ref{secProp} by showing how Definition \ref{defDepth} is a natural generalization of the notion of depth introduced by Tukey \cite{Tukey1974}. We prove that some useful properties of Tukey depth are also valid for Definition \ref{defDepth}, but we point out some distinctions. In particular, we study the set of points $C_r(\mathcal{F})$ of depth at least $r$ with respect to $\dep_{\mathcal{F}}$. In contrast to the analogous definition for Tukey depth, this set is not necessarily a polytope. However, we show that it is always convex.

\begin{theorem}
\label{thmRCen}
  Let $\mathcal{F}$ be a family of convex sets in $\mathbb{R}^d$ and $r$ a non-negative integer. Then the $r$-center $C_r(\mathcal{F})$ is convex.
\end{theorem}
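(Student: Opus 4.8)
The plan is to exhibit $C_r(\mathcal{F})$ as an intersection of halfspaces, which is automatically convex. Everything rests on first describing the complement of $C_r(\mathcal{F})$ as a union of halfspaces.

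First I would record the elementary monotonicity property built into Definition \ref{defDepth}: if $H$ is a closed halfspace that meets exactly $m$ members of $\mathcal{F}$, then every point $q\in H$ satisfies $\dep_\mathcal{F}(q)\leq m$, simply because $H$ is one of the competitors in the minimum defining $\dep_\mathcal{F}(q)$. Next I would use that $\mathcal{F}$ is finite, so that for each $p$ the value $\dep_\mathcal{F}(p)$ is the minimum of a finite set of non-negative integers and hence is \emph{attained} by some closed halfspace $H_p\ni p$. Combining these two observations yields the key characterization: a point $p$ fails to lie in $C_r(\mathcal{F})$, i.e. $\dep_\mathcal{F}(p)\leq r-1$, if and only if there is a closed halfspace $H$ with $p\in H$ that meets at most $r-1$ members of $\mathcal{F}$.

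From this characterization, letting $\mathcal{H}$ denote the collection of all closed halfspaces meeting at most $r-1$ members of $\mathcal{F}$, the complement of $C_r(\mathcal{F})$ is exactly $\bigcup_{H\in\mathcal{H}}H$. Passing to complements gives
\[
  C_r(\mathcal{F})=\bigcap_{H\in\mathcal{H}}\left(\mathbb{R}^d\setminus H\right),
\]
and each $\mathbb{R}^d\setminus H$ is an open halfspace, hence convex. Since an arbitrary intersection of convex sets is convex, $C_r(\mathcal{F})$ is convex, as claimed.

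I do not expect a genuine obstacle; the entire content lies in setting up the complement characterization correctly. The one point needing care is the forward direction, that membership in $C_r(\mathcal{F})$ is equivalent to avoiding every halfspace in $\mathcal{H}$, which relies on the defining minimum being attained — this is precisely where finiteness of $\mathcal{F}$ enters. It is worth remarking that the argument uses nothing about the members of $\mathcal{F}$ being convex, and that the intersection above may range over infinitely many distinct halfspaces; this is exactly why $C_r(\mathcal{F})$ is convex but, unlike the Tukey case, need not be a polytope.
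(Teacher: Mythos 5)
Your argument is correct, and it reaches the conclusion by a genuinely more direct route than the paper. You characterize the complement of $C_r(\mathcal{F})$ as the union of all closed halfspaces meeting at most $r-1$ members of $\mathcal{F}$ (the attainment of the defining minimum is indeed automatic, since the competing values form a nonempty set of integers in $\{0,\ldots,|\mathcal{F}|\}$), and then write $C_r(\mathcal{F})$ as the intersection of the complementary open halfspaces; convexity follows at once. The paper instead organizes the ``bad'' halfspaces by direction: Lemma \ref{lemPlank} uses a unimodality argument on the projections to show that, for each unit vector $u$, the points surviving every halfspace with normal $\pm u$ form a single closed plank, and Lemma \ref{lemInt} then identifies $C_r(\mathcal{F})$ with the intersection of these planks over all directions. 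For Theorem \ref{thmRCen} alone your version is shorter and, as you correctly remark, never uses convexity of the members of $\mathcal{F}$. What the paper's plank machinery buys is not convexity but the extra quantitative content of the second bullet of Lemma \ref{lemPlank} --- that the halfspace bounding each plank \emph{contains} more than $|\mathcal{F}|-r$ sets --- together with closedness of the planks; both are exactly what is needed later for the compactness-plus-Helly argument in Proposition \ref{propSimplex} and hence for Theorem \ref{thmEquiv}. Your decomposition into open halfspaces does not record that information, so it proves the theorem at hand but would not substitute for Lemmas \ref{lemPlank} and \ref{lemInt} elsewhere in the paper.
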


In Section \ref{secCPT} we focus our attention to centerpoint results for Definition \ref{defDepth}. We show that the intersection pattern of the family $\mathcal{F}$ plays a key role. We study families of convex sets in which each family of at most $k$ elements has a non-empty intersection ($k$-intersecting families). In this sense, this section can be though of as an interpolation between the celebrated theorems by Rado and Helly \cite{Helly1923,Matousek2002,Rado1946}. 

More precisely, we define $\alpha_{d,k}$ as the maximum number such that any finite family $\mathcal{F}$ of $k$-intersecting convex sets on $\mathbb{R}^d$ has a point of depth at least $\alpha_{d,k}\cdot |\mathcal{F}|$. As an easy consequence of the results in Section \ref{secProp} we obtain a basic bound on $\alpha_{d,k}$.

\begin{theorem}
\label{thmAsy}
	For a fixed $k$, the value of $\alpha_{d,k}$ is in $\Omega\left(\frac{1}{\sqrt[k]{d+1}}\right)$.
\end{theorem}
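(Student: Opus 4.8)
The plan is to bound $\dep_\mathcal{F}$ from below by a Tukey depth and then feed a carefully chosen point set into Rado's centerpoint theorem \cite{Rado1946} (the classical Tukey centerpoint theorem), exploiting the $k$-intersecting hypothesis by indexing that point set over \emph{all} $k$-subsets of $\mathcal{F}$. Write $n=|\mathcal{F}|$. Since $\mathcal{F}$ is $k$-intersecting, for every $k$-subset $T\subseteq\mathcal{F}$ the intersection $\bigcap_{F\in T}F$ is nonempty, so I may pick a witness $x_T\in\bigcap_{F\in T}F$. Let $S^\ast=\{x_T: T\in\binom{\mathcal{F}}{k}\}$, a finite (multi)set of size $\binom{n}{k}$.

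First I would apply Rado's centerpoint theorem to $S^\ast$: there is a point $p\in\mathbb{R}^d$ whose Tukey depth with respect to $S^\ast$ is at least $\binom{n}{k}/(d+1)$, meaning that every closed halfspace $H$ containing $p$ satisfies $|S^\ast\cap H|\ge\binom{n}{k}/(d+1)$. This is exactly the engine provided by Section \ref{secProp}, where $\dep_\mathcal{F}$ is compared with the Tukey depth of a selection of representatives.

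The key step is to convert this Tukey guarantee into a depth guarantee for $\mathcal{F}$. Fix any closed halfspace $H$ containing $p$ and set $A_H=\{F\in\mathcal{F}: F\cap H\neq\emptyset\}$, so that $\dep_\mathcal{F}(p)=\min_H|A_H|$. If $x_T\in H$, then every set of $T$ meets $H$, hence $T\subseteq A_H$; therefore $\{T: x_T\in H\}\subseteq\binom{A_H}{k}$ and $|S^\ast\cap H|\le\binom{|A_H|}{k}$. Combining this with Rado's bound yields $\binom{|A_H|}{k}\ge\binom{n}{k}/(d+1)$ for every $H\ni p$.

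It then remains to invert this binomial inequality. Using $\binom{|A_H|}{k}\le |A_H|^k/k!$ and $\binom{n}{k}\ge (n-k+1)^k/k!$ gives $|A_H|\ge (n-k+1)(d+1)^{-1/k}$, so $\dep_\mathcal{F}(p)\ge (n-k+1)(d+1)^{-1/k}$. For fixed $k$ and $n\to\infty$ this is $(1-o(1))\,n\,(d+1)^{-1/k}$, which delivers $\alpha_{d,k}=\Omega\!\left((d+1)^{-1/k}\right)$. The one genuine idea is to index the representatives by all $k$-subsets rather than by a partition of $\mathcal{F}$ into blocks of size $k$: a partition produces the cancellation $k\cdot\tfrac{1}{k(d+1)}=\tfrac{1}{d+1}$ and loses the exponent, whereas the $\binom{\cdot}{k}$ bookkeeping is precisely what turns the single factor $(d+1)$ coming from Rado into a $k$-th root. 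I expect the binomial inversion and the $o(1)$ accounting to be the only routine points, so there is no serious obstacle once the right point set is in place.
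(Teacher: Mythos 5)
Your proposal is correct and follows essentially the same route as the paper: the paper's Proposition \ref{propBBound} picks one witness point in the intersection of each $k$-subset of $\mathcal{F}$, applies Rado's centerpoint theorem to that set of $\binom{|\mathcal{F}|}{k}$ points, and bounds the number of witnesses in a halfspace meeting $r$ sets by $\binom{r}{k}$, after which Theorem \ref{thmAsy} is obtained by exactly the same binomial inversion $\frac{r^k}{k!}\geq\binom{r}{k}\geq\frac{1}{d+1}\cdot\frac{(|\mathcal{F}|-k+1)^k}{k!}$ that you use. There is nothing to add.
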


In order to improve this bound, we establish a key connection that shows that determining the precise value of $\alpha_{d,k}$ is deeply related to a purely combinatorial problem on hitting sets. As a reminder, for a family of sets $\mathcal{A}$ of a set $X$ a \textit{hitting set for $\mathcal{A}$} is a subset $Y$ of $X$ for which $Y\cap A\neq \emptyset$ for every $A$ in $\mathcal{A}$.

We define $\beta_{m,k}$ as the minimum real number $\beta$ for which the following holds. For any finite set $X$ and any $m$ of its subsets $A_1, \ldots, A_m$ with $|A_i|>\beta\cdot |X|$ ($i=1,2,\ldots,m$) there exists a hitting set of size at most $k$. The problem finding hitting sets has been widely studied, both theoretically and algorithmically. Perhaps the work in the literature closest to $\beta_{m,k}$ is a parameter studied by N. Alon in \cite{Alon1990}.

The following theorem is our main result. It provides a relationship between geometrical parameter $\alpha$ and the combinatorial parameter $\beta$.

\begin{theorem}
\label{thmEquiv}
	For $d$ a positive integer and $k$ an integer in $[d+1]$ we have:
	
	\[
		\alpha_{d,k}+\beta_{d+1,k}=1.
	\]
\end{theorem}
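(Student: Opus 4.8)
The plan is to prove the equality by establishing the two inequalities $\alpha_{d,k}\ge 1-\beta_{d+1,k}$ and $\alpha_{d,k}\le 1-\beta_{d+1,k}$ separately, using the same dictionary between halfspaces and set systems in both directions. The first step is a reformulation of depth. Writing $n=|\mathcal{F}|$ and noting that a closed halfspace $H$ fails to meet a convex set $C$ exactly when $C$ lies in the open complement $\mathbb{R}^d\setminus H$, one checks that $\dep_\mathcal{F}(p)\ge r$ holds if and only if $p$ belongs to \emph{every} open halfspace that entirely contains more than $n-r$ members of $\mathcal{F}$. Call such a halfspace \emph{rich}. Thus a point of depth at least $r$ is precisely a point of $\bigcap\{G:G\text{ rich}\}$, and the whole problem becomes: how large a threshold still forces this intersection of rich halfspaces to be nonempty.

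For the lower bound $\alpha_{d,k}\ge 1-\beta_{d+1,k}$, aim for depth $(1-\beta_{d+1,k})n$, so that rich means containing more than $\beta_{d+1,k}\,n$ sets. By Helly's theorem it suffices to show that any $d+1$ rich halfspaces $G_1,\dots,G_{d+1}$ have a common point. Index $\mathcal{F}$ by a ground set $X$ of size $n$ and set $A_j=\{i:C_i\subseteq G_j\}$; then $|A_j|>\beta_{d+1,k}|X|$, so by the definition of $\beta_{d+1,k}$ there is a hitting set $Y\subseteq X$ with $|Y|\le k$ meeting every $A_j$. Choosing for each $j$ an index $i_j\in Y$ with $C_{i_j}\subseteq G_j$ and invoking the $k$-intersecting hypothesis on the at most $k$ sets $\{C_i:i\in Y\}$ produces a point $q\in\bigcap_{i\in Y}C_i$; since $q\in C_{i_j}\subseteq G_j$ for every $j$, this $q$ is the desired common point. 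Helly then yields a point of depth at least $(1-\beta_{d+1,k})n$.

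For the upper bound $\alpha_{d,k}\le 1-\beta_{d+1,k}$, I would build, for each $\beta<\beta_{d+1,k}$, a $k$-intersecting family with maximum depth below $(1-\beta)n$. By definition of $\beta_{d+1,k}$ there is a ground set $X$ and subsets $A_1,\dots,A_{d+1}$ with $|A_j|>\beta|X|$ and no hitting set of size $\le k$; write $B_j=X\setminus A_j$. Let $u_1,\dots,u_{d+1}$ be the facet normals of a regular simplex (so $\sum_j u_j=0$) and $q_1,\dots,q_{d+1}$ its vertices, normalized so that $\langle u_j,q_j\rangle=-1$ and $\langle u_j,q_{j'}\rangle=1/d$ for $j'\ne j$. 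Define $C_i=\mathrm{conv}\{q_j:i\in B_j\}$. Because the system has no hitting set of size $\le k$, every $Y$ with $|Y|\le k$ satisfies $Y\subseteq B_j$ for some $j$, whence $q_j\in\bigcap_{i\in Y}C_i$; this makes $\mathcal{F}=\{C_i\}$ both $k$-intersecting and nonempty-valued (no index lies in all $A_j$, else it would be a hitting set of size one). On the other hand, for $i\in A_j$ the set $C_i$ uses only vertices $q_{j'}$ with $j'\ne j$, all at level $\langle u_j,\cdot\rangle=1/d$, so $C_i\subseteq\{\langle u_j,x\rangle=1/d\}$. Given any $p$, the relation $\sum_j\langle u_j,p\rangle=0$ yields some $j_0$ with $\langle u_{j_0},p\rangle\le 0$; the closed halfspace $\{\langle u_{j_0},x\rangle\le 0\}$ contains $p$ and is disjoint from every $C_i$ with $i\in A_{j_0}$, so it meets at most $|B_{j_0}|=n-|A_{j_0}|<(1-\beta)n$ sets. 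Hence every point has depth below $(1-\beta)n$, and letting $\beta\uparrow\beta_{d+1,k}$ gives the bound.

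The conceptual core, and the step I would be most careful about, is the dictionary itself: the fact that ``no hitting set of size $\le k$'' is exactly what forces the family to be $k$-intersecting, while the common cardinality $\beta\,|X|$ translates into the depth deficit. The genuinely technical obstacle is the Helly step in the lower bound, where the rich halfspaces are open, unbounded, and infinite in number; I would handle this by restricting to a large ball containing all sets of $\mathcal{F}$ (so the common points $q$ produced above lie in it), applying the finite version of Helly together with compactness, and then using that the intersection of the open rich halfspaces is itself open to avoid boundary issues. A final bookkeeping point is to reconcile the strict inequalities and the ceilings and floors with the exact equality $\alpha_{d,k}+\beta_{d+1,k}=1$ by the limiting arguments indicated above.
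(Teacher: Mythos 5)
Your proposal is correct and its combinatorial core coincides with the paper's: the dictionary between rich halfspaces and the sets $A_j$, the use of a hitting set of size at most $k$ to produce (or contradict) a common point, and the simplex construction for the upper bound are all the same. Indeed, your $C_i=\mathrm{conv}\{q_j: i\in B_j\}$ is literally the paper's $F_x=\bigcap_{i\in I_x}C_i$ described as a face of the simplex rather than as an intersection of facets. The one genuine difference is in the lower bound: the paper routes the geometric reduction through Proposition \ref{propSimplex}, which is built on the plank machinery of Lemmas \ref{lemPlank} and \ref{lemInt} (one closed plank per direction, then Helly applied to planks), whereas you apply Helly directly to the family of ``rich'' open halfspaces. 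Your route is more self-contained for this theorem, at the cost of having to justify an infinite Helly/compactness step for open, possibly unbounded halfspaces --- a point you rightly flag; note that shrinking a rich open halfspace to a closed one containing the same members requires the members to be compact (an open disk tangent to the bounding hyperplane of an open halfspace containing it is the standard obstruction), so a reduction to closed bounded sets should be made explicit. The paper's version defers essentially the same compactness issue to its ``by a compactness argument'' inside Proposition \ref{propSimplex}, so neither treatment is more rigorous on this point; what the paper's detour buys is reuse of the plank lemmas already needed for Theorem \ref{thmRCen}, while yours buys a shorter, single-purpose argument.
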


This result is useful in both directions. On the one hand, for values of $k$ close to $d$ we use the combinatorial interpretation to get sharp values for $\alpha_{d,k}$ that cannot be obtained directly from an application of Rado's centerpoint theorem.

\begin{theorem}
\label{thmExact}
For any positive integer $d$ the value of $\alpha_{d,d}$ is $\frac{d}{d+1}$.
\end{theorem}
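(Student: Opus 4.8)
The plan is to invoke Theorem \ref{thmEquiv} to convert the geometric statement into a purely combinatorial one. Setting $k=d$, the identity $\alpha_{d,d}+\beta_{d+1,d}=1$ reduces the claim $\alpha_{d,d}=\frac{d}{d+1}$ to the single computation
\[
    \beta_{d+1,d}=\frac{1}{d+1}.
\]
So the entire task becomes understanding, for $d+1$ subsets $A_1,\dots,A_{d+1}$ of a finite ground set $X$, when a hitting set of size at most $d$ is guaranteed, purely as a function of the relative sizes $|A_i|/|X|$.

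The conceptual heart of the argument is a clean characterization: \emph{among $d+1$ sets, a hitting set of size at most $d$ exists if and only if the sets are not pairwise disjoint.} One direction is immediate, since $d+1$ pairwise disjoint sets force any hitting set to use a distinct element for each set, hence to have size $d+1>d$. For the converse, if some two sets $A_i$ and $A_j$ share a point $p$, then $p$ simultaneously hits both, leaving at most $d-1$ further sets to hit with at most $d-1$ additional points; the total is at most $d$. This reduces the quantity $\beta_{d+1,d}$ to the threshold above which the size condition on the $A_i$ makes pairwise disjointness impossible.

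With this in hand, the two bounds follow by elementary counting. For $\beta_{d+1,d}\le \frac{1}{d+1}$, I would argue that if $|A_i|>\frac{1}{d+1}|X|$ for every $i$, then $\sum_{i=1}^{d+1}|A_i|>(d+1)\cdot\frac{1}{d+1}|X|=|X|$, so by pigeonhole the sets cannot be pairwise disjoint; the characterization then supplies a hitting set of size at most $d$, so the defining property of $\beta_{d+1,d}$ holds at $\beta=\frac{1}{d+1}$. For the matching lower bound $\beta_{d+1,d}\ge \frac{1}{d+1}$, I would exhibit a tight family witnessing failure for every $\beta<\frac{1}{d+1}$: take $X=\{1,\dots,d+1\}$ and $A_i=\{i\}$. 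These are pairwise disjoint with no hitting set of size at most $d$, yet $|A_i|=1=\frac{1}{d+1}|X|>\beta|X|$ whenever $\beta<\frac{1}{d+1}$.

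The main point requiring care is the strict-versus-non-strict inequality in the definition of $\beta_{m,k}$: the hypothesis is $|A_i|>\beta|X|$ with a strict inequality, which is exactly what makes the value $\frac{1}{d+1}$ itself achievable in the upper bound while the singleton construction rules out every smaller $\beta$. I do not anticipate a genuine obstacle beyond this bookkeeping, since once Theorem \ref{thmEquiv} and the hitting-set characterization are set up, both bounds are one-line pigeonhole and one-line construction arguments. Combining them gives $\beta_{d+1,d}=\frac{1}{d+1}$ and hence $\alpha_{d,d}=1-\frac{1}{d+1}=\frac{d}{d+1}$.
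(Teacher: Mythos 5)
Your proposal is correct and follows essentially the same route as the paper: reduce via Theorem \ref{thmEquiv} to showing $\beta_{d+1,d}=\tfrac{1}{d+1}$, prove the upper bound by noting that $d+1$ sets each larger than $\tfrac{1}{d+1}|X|$ cannot be pairwise disjoint (so a common point of two of them plus one point from each remaining set gives a hitting set of size $d$), and match it with the singleton family $A_i=\{i\}$ on $X=[d+1]$. The only cosmetic difference is that you package the first step as an explicit ``hitting set of size $\le d$ iff not pairwise disjoint'' characterization, which the paper uses implicitly.
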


On the other hand, for fixed values of $k$ we use Rado's centerpoint theorem to give results in hitting set theory of independent interest.

\begin{theorem}
\label{thmAppCPT}
  The value of $\beta_{m,k}$ is in $1-\Omega \left(\frac{1}{\sqrt[k]{m}}\right)$
\end{theorem}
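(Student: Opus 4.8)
The plan is to obtain Theorem \ref{thmAppCPT} as a direct consequence of the equivalence in Theorem \ref{thmEquiv} together with the asymptotic lower bound on $\alpha_{d,k}$ from Theorem \ref{thmAsy}. The key observation is that Theorem \ref{thmEquiv} lets us transfer any lower bound on the geometric parameter $\alpha$ into an upper bound on the combinatorial parameter $\beta$, and a quantity of the form $1-\Omega(1/\sqrt[k]{m})$ is precisely the kind of upper bound we are after.

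First I would set $d=m-1$, so that $d+1=m$ and $k\in[d+1]=[m]$, which holds for every fixed $k$ once $m$ is large enough. Applying Theorem \ref{thmEquiv} in this case gives
\[
  \beta_{m,k}=1-\alpha_{m-1,k}.
\]
Next I would invoke Theorem \ref{thmAsy}, which for fixed $k$ guarantees a constant $c=c(k)>0$ with $\alpha_{d,k}\geq c\cdot(d+1)^{-1/k}$ for all sufficiently large $d$. Substituting $d=m-1$ yields $\alpha_{m-1,k}\geq c\cdot m^{-1/k}$, hence
\[
  \beta_{m,k}=1-\alpha_{m-1,k}\leq 1-c\cdot m^{-1/k},
\]
which is exactly the assertion $\beta_{m,k}\in 1-\Omega\left(1/\sqrt[k]{m}\right)$.

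The content of the argument is therefore pushed back entirely into the two ingredients it cites: Theorem \ref{thmEquiv} supplies the bridge between the two parameters, while the geometric lower bound of Theorem \ref{thmAsy}, which ultimately rests on Rado's centerpoint theorem, supplies the quantitative estimate. I do not expect a genuine obstacle in assembling these pieces; the only point that requires care is the bookkeeping of the $\Omega$-notation, namely checking that the implied constant from Theorem \ref{thmAsy} depends only on $k$ and not on $m$, and that the inequality survives the change of variables $d=m-1$. Both of these are immediate. In this sense the difficulty of the statement lives in Theorem \ref{thmAsy}, and Theorem \ref{thmAppCPT} is essentially its combinatorial shadow under the correspondence of Theorem \ref{thmEquiv}.
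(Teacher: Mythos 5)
Your proposal is correct and follows essentially the same route as the paper: the authors state that Theorem \ref{thmAppCPT} ``follows immediately from Proposition \ref{propBBound} and Theorem \ref{thmEquiv}'', and since Theorem \ref{thmAsy} is just the asymptotic packaging of Proposition \ref{propBBound}, your combination of Theorem \ref{thmEquiv} (with $d=m-1$) and Theorem \ref{thmAsy} is the same argument. The bookkeeping you flag (constant depending only on $k$, the substitution $d=m-1$) is indeed the only thing to check, and it goes through as you describe.
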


This result turns up to be asymptotically correct if $k$ is fixed and $m$ goes to infinity.

For other values of $k$ we present a general framework to get improved bounds. The paradigm we use is the ``probabilistic method with blemishes''. We refer the reader to Chaper 3 of the excellent book \cite{Alon2011} for details. The results that can be obtained depend on the asymptotic relation between $k$ and $m$. This resembles previous applications of the probabilistic method for two-variable parameters, e.g. counting connected graphs in \cite{Hofstad2006}. As an example for the use of this framework, we study the case $m=2k$.

\begin{theorem}
\label{thmMed}
  The value of $\beta_{2k,k}$ is at most $1-\frac{1}{\sqrt[k]{15}}$ and thus the value of $\alpha_{2k-1,k}$ is at least $\frac{1}{\sqrt[k]{15}}$.
\end{theorem}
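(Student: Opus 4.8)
The plan is to prove the combinatorial half, $\beta_{2k,k}\le 1-\frac{1}{\sqrt[k]{15}}$; the geometric consequence then follows immediately from Theorem \ref{thmEquiv} applied with $d=2k-1$ (note $k\in[2k]=[d+1]$), since then $\alpha_{2k-1,k}=1-\beta_{2k,k}\ge \frac{1}{\sqrt[k]{15}}$. So let $X$ be a finite set with $|X|=n$ and let $A_1,\dots,A_{2k}$ be subsets with $|A_i|>\left(1-15^{-1/k}\right)n$ for every $i$. Passing to complements, this says $|X\setminus A_i|<15^{-1/k}\,n$, and the task becomes: find a set $Y$ of at most $k$ points meeting every $A_i$, equivalently a $Y$ with $Y\not\subseteq X\setminus A_i$ for all $i$.

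Next I would run the probabilistic method with alterations. Sample $t$ points of $X$ independently and uniformly at random (with replacement) and let $S$ be the resulting set of points. For a fixed $i$, a single sampled point lies in $X\setminus A_i$ with probability less than $15^{-1/k}$, so the probability that all $t$ sampled points avoid $A_i$ is less than $15^{-t/k}$. By linearity of expectation the expected number $U$ of indices $i$ for which $S\cap A_i=\emptyset$ satisfies $\mathbb{E}[U]<2k\cdot 15^{-t/k}$. For each such ``missed'' set I add one of its elements to $S$; the enlarged set is a genuine hitting set of size at most $t+U$.

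It then remains to choose the integer $t$ so that the expected final size is at most $k$, i.e. so that $t+2k\cdot 15^{-t/k}\le k$. Writing $t=\gamma k$ this reduces to the single-variable inequality $\gamma+2\cdot 15^{-\gamma}\le 1$. The function $\gamma\mapsto\gamma+2\cdot 15^{-\gamma}$ is convex and its minimum value is strictly below $1$: the borderline base $c$ for which $\min_\gamma(\gamma+2c^{-\gamma})=1$ solves $\ln c-\ln\ln c=1+\ln 2$, which is about $14.55$, so the clean choice $c=15$ leaves a small amount of slack. Consequently the inequality holds on a whole interval of values of $\gamma$ (numerically about $[0.55,0.70]$), and since the integer-valued final size has expectation at most $k$, some realization of the random construction is an actual hitting set of size at most $k$. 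This proves $\beta_{2k,k}\le 1-15^{-1/k}$.

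The one genuinely delicate point is this last optimization over the \emph{integer} $t=\cei{\gamma k}$: I must guarantee that $\gamma k$ can be rounded to an integer still lying in the feasible interval. For all sufficiently large $k$ this is automatic, since the feasible interval has length proportional to $k$ and eventually contains an integer; the finitely many small values of $k$ have to be checked separately (for instance $k=1$ follows at once from $|A_1\cap A_2|\ge|A_1|+|A_2|-n>0$). This is precisely the reason for stating the theorem with the rounded constant $15$ rather than the true threshold near $14.55$: the extra slack absorbs the rounding of $t$ and the small cases.
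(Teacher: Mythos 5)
Your argument is the same ``probabilistic method with blemishes'' used in the paper: sample $t=k-\ell$ points, bound the expected number of missed sets by $m(1-\beta)^{t}$, and repair each miss with one extra point; your $t\approx 0.63k$ matches the paper's $\ell=0.37k$. The one substantive difference is the accounting, and it is exactly what creates the integrality headache you flag at the end. You require the expected \emph{final size} $t+2k\cdot 15^{-t/k}$ to be at most $k$, which forces $\gamma=t/k$ into the window where $\gamma+2\cdot 15^{-\gamma}\le 1$, of length only about $0.14$; this window contains no integer multiple of $1/k$ for $k\in\{1,2,4\}$, and of these you only dispose of $k=1$. The paper instead applies the integrality rounding to the number of \emph{hit} sets: if the expected number of hit sets exceeds $m-\ell-1$, some realization hits at least $m-\ell$ of them, so the condition to verify is $t+2k\cdot 15^{-t/k}<k+1$ rather than $\le k$. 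That extra unit of slack is enough that a feasible integer $t$ exists for every $k\ge 1$ (e.g.\ $t=1$ for $k=2$ gives $1+4\cdot 15^{-1/2}\approx 2.03<3$, and $t=3$ for $k=4$ gives $3+8\cdot 15^{-3/4}\approx 4.05<5$), so no residual case analysis is needed. Either finish the checks for $k=2$ and $k=4$ by hand (they do go through directly, e.g.\ for $k=2$ three of the four complements cannot cover $X$) or switch to the paper's comparison against $m-\ell-1$. To your credit, you treat the integrality of the sample size more carefully than the paper itself, which takes $\ell=0.37k$ without rounding; replacing it by $\flo{0.37k}$ preserves both of the paper's numerical estimates.
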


In Section \ref{secApp} we discuss two further applications of the theory developed. Here we state them briefly and we refer the reader to the corresponding section for context and details.

The first is a Helly-type theorem for fractional hyperplane tranversals through a common point.

\begin{theorem}
\label{thmTHelly}
	Let $\mathcal{F}$ be a finite family of convex sets in $\mathbb{R}^d$.

	\begin{itemize}
		\item If $\mathcal{F}$ is $k$-intersecting for $k\geq 2$ then there exists a point such that each hyperplane through it is transversal to $\Omega\left(\frac{|\mathcal{F}|}{\sqrt[k]{d+1}}\right)$ sets of the family. 
		\item If $\mathcal{F}$ is $d$-intersecting then there exists a point such that each hyperplane through it is transversal to at least $\frac{d}{d+1}\cdot|\mathcal{F}|$ sets of the family.
		\item If $\mathcal{F}$ is $\cei{\frac{d}{2}}$-intersecting then there exists a point such that each hyperplane through it is transversal to at least $\frac{1}{\sqrt[d]{225}}\cdot|\mathcal{F}|$ sets of the family.
	\end{itemize}
\end{theorem}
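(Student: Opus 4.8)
The plan is to reduce all three bullets to the centerpoint-type bounds on $\alpha_{d,k}$ already established, via a single structural lemma relating $\dep_\mathcal{F}$ to hyperplane transversals. For a point $p$ and a unit direction $u$, write $c=\langle p,u\rangle$ and call a set $F\in\mathcal{F}$ \emph{above} if $F\subseteq\{x:\langle x,u\rangle>c\}$ and \emph{below} if $F\subseteq\{x:\langle x,u\rangle<c\}$. Every set that is neither above nor below meets the hyperplane $\ell_u=\{x:\langle x,u\rangle=c\}$ through $p$, i.e.\ is transversal to it, so the number of sets transversal to $\ell_u$ equals $|\mathcal{F}|$ minus the number of above sets minus the number of below sets.

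The key observation is that if $\mathcal{F}$ is pairwise intersecting, then for every direction $u$ at most one of the two classes (above, below) is nonempty: an above set and a below set would be strictly separated by $\ell_u$ and hence disjoint, contradicting pairwise intersection. Assuming the below class is empty (the other case is symmetric), the above sets are exactly those avoiding the closed halfspace $\{x:\langle x,u\rangle\le c\}$, which contains $p$, so by Definition \ref{defDepth} their number is at most $|\mathcal{F}|-\dep_\mathcal{F}(p)$. Therefore $\ell_u$ is transversal to at least $\dep_\mathcal{F}(p)$ sets, and since $u$ was arbitrary this proves the lemma: for a pairwise-intersecting family one has $\tdep_\mathcal{F}(p)\ge\dep_\mathcal{F}(p)$ at every point $p$.

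With the lemma in hand, each bullet follows by choosing $p$ to realize the guaranteed depth. Since a $k$-intersecting family with $k\ge 2$ is in particular pairwise intersecting, and since by the definition of $\alpha_{d,k}$ there is a point $p$ with $\dep_\mathcal{F}(p)\ge\alpha_{d,k}\cdot|\mathcal{F}|$, the lemma produces a point all of whose hyperplanes are transversal to at least $\alpha_{d,k}\cdot|\mathcal{F}|$ sets. Plugging in the three estimates then finishes the argument: Theorem \ref{thmAsy} gives $\alpha_{d,k}\in\Omega(1/\sqrt[k]{d+1})$ for the first bullet; Theorem \ref{thmExact} gives $\alpha_{d,d}=\tfrac{d}{d+1}$ for the second (legitimate because for $d\ge 2$ a $d$-intersecting family is pairwise intersecting); and for the third we take $k=\lceil d/2\rceil$ and invoke Theorem \ref{thmMed}.

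The main obstacle is the exponent bookkeeping in the third bullet. Theorem \ref{thmMed} is stated as $\alpha_{2k-1,k}\ge 1/\sqrt[k]{15}$, which directly covers odd $d=2k-1$: there $k=\lceil d/2\rceil$ and $1/\sqrt[k]{15}=15^{-2/(d+1)}\ge 15^{-2/d}=1/\sqrt[d]{225}$, the last inequality because $15^{-x}$ is decreasing and $2/(d+1)<2/d$. For even $d$ one must either run the probabilistic framework underlying Theorem \ref{thmMed} directly for the pair $(m,k)=(d+1,\lceil d/2\rceil)$, or obtain the needed estimate by a monotonicity comparison of the $\beta_{m,k}$ through Theorem \ref{thmEquiv}, and then verify that the exponent conversion still lands at the uniform bound $1/\sqrt[d]{225}$. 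The only other point requiring care is the degenerate low-dimensional regime in which $\lceil d/2\rceil<2$, where the pairwise-intersection hypothesis of the lemma can fail; these finitely many small cases are checked separately and do not affect the stated asymptotics.
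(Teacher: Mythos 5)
Your argument is essentially the paper's: your transversality lemma is exactly Proposition \ref{propTrans} (the paper proves it by contradiction---a hyperplane through a deep point meeting too few sets would leave an entire set in each open halfspace, giving two disjoint members---whereas you count the ``above''/``below'' sets directly), and the three bullets are then read off from Theorems \ref{thmAsy}, \ref{thmExact} and \ref{thmMed} just as in the paper's one-line corollary. Your bookkeeping for the third bullet is in fact more careful than the paper's own derivation (which silently covers only odd $d=2k-1$); the one caveat is that your proposed fallback via ``monotonicity of $\beta_{m,k}$'' points the wrong way, since $\beta_{m,k}$ is nondecreasing in $m$ and so $\beta_{2k+1,k}\geq\beta_{2k,k}$ gives no bound---for even $d$ one really must rerun the probabilistic computation of Theorem \ref{thmMed} with $m=d+1=2k+1$, which does go through with the same choice of $\ell$.
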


The second is an alternative proof of the following result of A. Holmsen \cite{Holmsen2013}.

\begin{theorem}
\label{thmHolmsen}
  Let $\mathcal{F}$ be a finite family of convex sets such that for any three distinct sets $A$, $B$, $C$ of $\mathcal{F}$ we have that
	
	\[
	  \text{conv}(A\cup B) \cap \text{conv}(B\cup C) \cap \text{conv}(C\cup A) \neq \emptyset
	\]
	
	Then there is a line transversal to at least $\frac{1}{8}|\mathcal{F}|$ sets of $\mathcal{F}$.
\end{theorem}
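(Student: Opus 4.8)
The plan is to pass to the dual transversal space and reduce the statement to a fractional Helly–type phenomenon for line transversals, feeding convexity back into the depth technology developed above. I treat the planar case, which is the natural home of this result and the setting in which lines are hyperplanes, so that the correspondence between $\dep$ and hyperplane transversals underlying Theorem \ref{thmTHelly} is available. After a generic rotation I parametrise non-vertical lines $y=sx+t$ by points $(s,t)$ of a dual plane. For a convex set $S$ the lines that miss $S$ split as $S^{+}=\{(s,t): t>u_S(s)\}$ and $S^{-}=\{(s,t): t<\ell_S(s)\}$, where $u_S$ is convex and $\ell_S$ is concave; hence $S^{+}$ and $S^{-}$ are disjoint \emph{convex} sets, and the transversal region $S^{*}=\{(s,t):\ell_S(s)\le t\le u_S(s)\}$ is exactly the complement of $S^{+}\cup S^{-}$.

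First I would translate the hypothesis. If a line meets $A,B,C$ at points with one lying between the other two, that middle point lies in $\text{conv}(A\cup B)\cap\text{conv}(B\cup C)\cap\text{conv}(C\cup A)$; so a common transversal to a triple always produces the point demanded by the hypothesis, and the content of the hypothesis is the converse, namely that every triple admits a common line transversal, i.e. $A^{*}\cap B^{*}\cap C^{*}\neq\emptyset$. Thus in the dual plane the family $\{S^{*}\}$ is three-wise intersecting. Were the $S^{*}$ convex this would be precisely Helly's hypothesis in $\mathbb{R}^{2}$ (three-wise intersection forcing a common point), and we would get a single transversal to the whole family. The entire difficulty, and the reason the constant falls from $1$ to $\tfrac18$, is that $S^{*}$ is not convex: it is the slab trapped between a convex graph and a concave graph.

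To extract a positive fraction I would work one direction at a time. Fixing a slope $s$ projects every set orthogonally to a line, turning $S$ into the interval $[\ell_S(s),u_S(s)]$; by the one–dimensional Helly theorem a subfamily has a common transversal of slope $s$ exactly when these intervals pairwise overlap, i.e. share a point. The three-wise intersection of the $\{S^{*}\}$ says that for every triple there is \emph{some} slope at which the three intervals overlap. The heart of the argument is to promote this ``every three overlap in some direction'' into ``a $\tfrac18$ fraction overlap in one direction''. Here I would invoke a centerpoint / fractional–Helly argument applied to the two \emph{convex} families $\{S^{+}\}$ and $\{S^{-}\}$ (in the averaging-over-directions spirit of the probabilistic-with-blemishes method behind Theorem \ref{thmMed}): selecting a slope together with an offset that escapes $S^{+}\cup S^{-}$ for at least $\tfrac18|\mathcal{F}|$ of the sets yields a line transversal to exactly those sets.

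I expect the main obstacle to be precisely this last step: converting purely triple-wise information, which lives in many different directions, into one direction good for a constant fraction of the family. Because $S^{*}$ is the complement of the two convex pieces $S^{+}$ and $S^{-}$ rather than a convex body, Helly cannot be used as a black box and the ``above'' and ``below'' obstructions must be controlled separately. The approach I would pursue is to split the family according to which of the two pieces is responsible for a miss and apply the planar centerpoint guarantee $\alpha_{2,2}=\tfrac23$ of Theorem \ref{thmExact} to each part; the two-sided bookkeeping is what I expect to produce the factor $\tfrac18$. Finally I would dispatch the degenerate cases (vertical lines, sets containing the chosen dual point, unbounded members) separately, as these affect only lower-order terms.
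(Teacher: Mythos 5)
There is a genuine gap, and it occurs at the very first step: your translation of the hypothesis into the dual is incorrect. The condition $\text{conv}(A\cup B)\cap\text{conv}(B\cup C)\cap\text{conv}(C\cup A)\neq\emptyset$ is implied by, but strictly weaker than, the property that every triple has a common line transversal. For example, three pairwise disjoint discs centred at the vertices of an equilateral triangle of side $s$, with radius $r$ satisfying $s/(2\sqrt{3})\le r< s\sqrt{3}/4$, satisfy the hypothesis (the centroid lies in each of the three convex hulls of pairwise unions) yet admit no common line transversal, since covering the three centres by a slab of width $2r$ requires $r\ge s\sqrt{3}/4$. So the dual family $\{S^{*}\}$ need not be three-wise intersecting and the reduction you build on collapses; indeed the interest of Theorem \ref{thmHolmsen} is precisely that its hypothesis is weaker than $T(3)$. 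Even setting this aside, the step you yourself identify as the heart of the argument --- promoting triple-wise overlap in varying directions to a $\tfrac18$ fraction overlapping in a single direction --- is never carried out, and the appeal to $\alpha_{2,2}=\tfrac23$ would require the dual families $\{S^{+}\}$ and $\{S^{-}\}$ to be pairwise intersecting, which is neither established nor a consequence of the hypothesis.

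For comparison, the paper's proof stays in the primal plane and is structured quite differently. It first shows there is a point of depth at least $\tfrac12|\mathcal{F}|$: otherwise Proposition \ref{propSimplex} yields three halfspaces $H_1,H_2,H_3$ with empty common intersection, each containing more than half the sets, so each pairwise intersection $H_i\cap H_j$ contains a member of $\mathcal{F}$; picking one set in each pairwise intersection gives a triple $A,B,C$ with $\text{conv}(A\cup B)\subseteq H_2$, $\text{conv}(B\cup C)\subseteq H_3$, $\text{conv}(C\cup A)\subseteq H_1$, contradicting the hypothesis. This deep point is then substituted for the point produced by the pairing lemma in Holmsen's original argument, and the cone/angular-distance analysis there produces four candidate lines, one of which must meet at least $\tfrac18|\mathcal{F}|$ sets by pigeonhole. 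If you wish to pursue a dual route you would first need a correct dual reformulation of the convex-hull hypothesis, which is not a statement about common transversals of triples.
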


Finally, in Section \ref{secDiscu} we present some additional remarks and we discuss some open problems.


\section{Properties of Definition \ref{defDepth}}
\label{secProp}

\subsection{Basic properties and relation to Tukey depth}

We begin this section by showing that Definition \ref{defDepth} satisfies some expected properties for a measure of depth. For this, we follow the work made in \cite{liu1990}, where such a list of properties is established for a different notion. Some of the results in this section have rather standard proofs, so we postpone them to Section \ref{secSProofs}.

An immediate consequence of Definition $\ref{defDepth}$ is that $\dep_\mathcal{F}$ is an integer valued function and	$0\leq \dep_\mathcal{F}(p) \leq |\mathcal{F}|$. The property below reflects the intuition that if $\mathcal{F}$ has bounded sets, then points far away from the family $\mathcal{F}$ should have depth $0$.

\begin{proposition}
\label{propLim}
	For any finite family $\mathcal{F}$ of bounded convex sets in $\mathbb{R}^d$ we have that
	
	\[
		\lim_{||x||\to \infty} \dep_\mathcal{F}(x)=0.
	\]
\end{proposition}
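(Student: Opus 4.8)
The plan is to exploit boundedness directly. Since $\mathcal{F}$ is finite and each of its members is bounded, the union $\bigcup \mathcal{F}$ is contained in some closed ball $B(0,M)$ centered at the origin. The depth function takes non-negative integer values, so establishing the limit amounts to showing that $\dep_\mathcal{F}(x)=0$ for every $x$ with $\|x\|$ sufficiently large; recall that $\dep_\mathcal{F}(x)=0$ precisely when some closed halfspace containing $x$ meets no member of $\mathcal{F}$.

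First I would fix an $x$ with $\|x\|>M$ and set $u=x/\|x\|$. The idea is to slide a hyperplane orthogonal to $u$ into the gap between the ball and $x$. Concretely, choose a threshold $t$ with $M<t<\|x\|$ (for instance $t=(M+\|x\|)/2$) and consider the closed halfspace $H=\{\,y\in\mathbb{R}^d : \langle y,u\rangle \geq t\,\}$, where $\langle\cdot,\cdot\rangle$ denotes the standard inner product.

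The next step is to verify the two required properties of $H$. Since $\langle x,u\rangle=\|x\|>t$, the point $x$ lies in $H$. On the other hand, every $y\in\bigcup\mathcal{F}$ satisfies $\langle y,u\rangle \leq \|y\|\,\|u\| = \|y\| \leq M < t$ by the Cauchy--Schwarz inequality, so $y\notin H$; thus $H$ is disjoint from every member of $\mathcal{F}$ and in particular intersects none of them. Consequently $\dep_\mathcal{F}(x)=0$.

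This already shows $\dep_\mathcal{F}(x)=0$ for all $\|x\|>M$, which yields the claimed limit. I do not anticipate any genuine obstacle here: the only points deserving care are the reduction to ``eventually zero'', which is legitimate because $\dep_\mathcal{F}$ is a non-negative integer-valued function, and the choice of the separating hyperplane, which is essentially forced once the bounding radius $M$ is in hand. The finiteness of $\mathcal{F}$ together with the boundedness of each member is exactly what guarantees the existence of such an $M$, so it is the hypothesis that makes the argument work.
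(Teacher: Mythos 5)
Your proof is correct and follows essentially the same route as the paper's: enclose $\bigcup\mathcal{F}$ in a ball of radius $M$ and, for $\|x\|>M$, separate $x$ from the ball by a hyperplane so that the halfspace containing $x$ meets no member of $\mathcal{F}$. The only difference is that you construct the separating halfspace explicitly (via $u=x/\|x\|$ and Cauchy--Schwarz) where the paper simply invokes the separation theorem.
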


The next property states that when $\mathcal{F}$ is symmetric about a point $p$, then the point of maximal depth should be $p$ and the depth of a point should decrease as it gets far away from $p$. To formulate this precisely, we introduce the following definition.

\begin{definition}
  We say that a family of convex sets $\mathcal{F}$ is \textit{symmetric about the origin} if for every set $F$ in $\mathcal{F}$ we have that $-F$ is also in $\mathcal{F}$.
\end{definition}

\begin{proposition}
\label{propDec}
	For any finite family $\mathcal{F}$ of convex sets in $\mathbb{R}^d$ that is symmetric about the origin and any $x\in \mathbb{R}^d$ the function $D:\mathbb{R}_{\geq 0}\to\mathbb{Z}$ given by $D(a)=\dep_{\mathcal{F}}(ax)$ is decreasing.
\end{proposition}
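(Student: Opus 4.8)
The plan is to reduce the statement to a two-point comparison. Since $D$ is integer valued, it suffices to show that whenever $0\le a\le b$ one has $\dep_{\mathcal{F}}(ax)\ge \dep_{\mathcal{F}}(bx)$, which is exactly the (weakly) decreasing property. The case $x=0$ is trivial, so I would assume $x\neq 0$ and set $p=ax$ and $q=bx$; both lie on the ray from the origin through $x$. The idea is to start from a halfspace that \emph{witnesses} the depth of the nearer point $p$ and to manufacture a halfspace witnessing a depth no larger for the farther point $q$. Note first that a witnessing halfspace exists: as $H$ ranges over closed halfspaces containing $p$, the number of members of $\mathcal{F}$ it meets ranges over a nonempty finite subset of $\{0,1,\dots,|\mathcal{F}|\}$, so the minimum $\dep_{\mathcal{F}}(p)$ is attained by some $H$.

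The conceptual heart is a single symmetry observation, and it is the only place the hypothesis enters: for any closed halfspace $H$, its reflection $-H=\{-y:y\in H\}$ through the origin meets exactly as many sets of $\mathcal{F}$ as $H$ does. Indeed, $-H$ meets $F$ if and only if $H$ meets $-F$, and since $\mathcal{F}$ is symmetric about the origin the map $F\mapsto -F$ is a bijection of $\mathcal{F}$; hence the two intersection counts coincide.

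With a witness $H=\{y:\langle y,u\rangle\ge c\}$ for $p$ and writing $t=\langle x,u\rangle$, I would split into two cases. If $q\in H$, then $H$ is itself a halfspace containing $q$ that meets $\dep_{\mathcal{F}}(p)$ sets, so $\dep_{\mathcal{F}}(q)\le \dep_{\mathcal{F}}(p)$. If $q\notin H$, then $bt<c\le at$; since $a\le b$ this forces $t<0$, and combined with $a\ge 0$ it gives $c\le at\le 0$, so $bt\le 0\le -c$. Hence $q$ lies in the reflected halfspace $-H=\{z:\langle z,u\rangle\le -c\}$, which by the symmetry observation meets exactly $\dep_{\mathcal{F}}(p)$ sets, and again $\dep_{\mathcal{F}}(q)\le \dep_{\mathcal{F}}(p)$.

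The sign chase in the second case is routine; the thing to be careful about is exactly this dichotomy, namely guaranteeing that whenever the farther point escapes the chosen halfspace it is captured by the reflected one. Apart from this, the main work has already been done once the symmetry observation is in hand, so I do not anticipate a serious obstacle beyond correctly tracking the signs of $t$ and $c$.
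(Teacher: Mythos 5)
Your proof is correct, but it follows a genuinely different route from the paper's. The paper derives the monotonicity from Theorem \ref{thmRCen}: by the symmetry hypothesis each non-empty $r$-center is symmetric about the origin, hence contains $0$, and since it is convex and contains $bx$ it contains the whole segment from $0$ to $bx$, giving $D(a)\geq D(b)$ for $a\in[0,b]$. You instead argue directly at the level of witnessing halfspaces: take an optimal halfspace $H$ for the nearer point $ax$, and either $bx\in H$ already, or the sign analysis forces $bx$ into the reflected halfspace $-H$, which meets the same number of members of $\mathcal{F}$ because $F\mapsto -F$ permutes the finite family. Both arguments hinge on the same underlying symmetry observation, but yours is self-contained and elementary -- it does not invoke the convexity of the center regions, which the paper establishes via the plank lemmas -- while the paper's is a two-line corollary once that machinery is in place. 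Your sign chase is sound (the case $q\notin H$ indeed forces $t<0$ and $c\le 0$, whence $bt\le 0\le -c$), and your observation that the minimum defining $\dep_{\mathcal{F}}(p)$ is attained is a harmless but correct piece of bookkeeping.
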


Notice that Theorem \ref{thmBasic} follows from Propostion \ref{propLim} and Proposition \ref{propDec}.

Now we turn to the relation between Definition \ref{defDepth} and Tukey depth \cite{Tukey1974}. Given a set of points $S$ on $\mathbb{R}^d$, the \textit{Tukey depth of a point $p$ with respect to $S$} is defined as the minimum value of $|S\cap H|$ where $H$ is a halfspace that contains $p$. We will denote this value as $\tdep_S(p)$.

The following proposition establishes some basic relationships between Tukey depth and Definition \ref{defDepth}. The first part justifies calling Definition \ref{defDepth} a generalization of Tukey depth. The second part tells us that we can get information of $\dep_\mathcal{F}$ in terms of the $\tdep$ with respect to all the representative sets of $\mathcal{F}$, but the third part tells us that is not enough to completely understand $\dep_\mathcal{F}$.

\begin{proposition}
\label{propTukey}
 \begin{itemize}
	 \item Let $S$ be a set of points in $\mathbb{R}^d$ and $\mathcal{S}$ be the family of singletons given by $S$. Then for any point $p$ we have
	
	\[
		\dep_\mathcal{S}(p)=\tdep_S(p)
	\]
	 \item Let $\mathcal{F}=\{F_1,\ldots,F_n\}$ be a family of convex sets in $\mathbb{R}^d$. Then for any point $p$ we have
	
	\[
	  \dep_\mathcal{F}(p) \geq \sup_{S:\; \forall i \,|S\cap F_i|=1}\tdep_S(p)
	\]
	 \item In some cases the inequality above is strict
 \end{itemize}
\end{proposition}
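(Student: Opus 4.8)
The three parts decrease in difficulty, so I would dispatch the first two quickly and concentrate the real effort on the third. For the first part I would argue straight from the definitions: a closed halfspace $H$ meets the singleton $\{p_i\}$ exactly when $p_i\in H$, so the number of members of $\mathcal{S}$ met by $H$ equals $|S\cap H|$. Minimizing over all closed halfspaces $H$ containing $p$ turns the left side into $\dep_\mathcal{S}(p)$ and the right side into $\tdep_S(p)$, giving the equality.

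For the second part I would fix a representative set $S=\{s_1,\ldots,s_n\}$ with $s_i\in F_i$, establish the pointwise bound $\dep_\mathcal{F}(p)\geq\tdep_S(p)$, and then take the supremum over $S$. The one observation needed is monotonicity of ``being hit'': whenever a halfspace $H$ contains $s_i$ it also meets $F_i$, since $s_i\in F_i\cap H$. Hence for every closed halfspace $H$ containing $p$, the number of sets of $\mathcal{F}$ met by $H$ is at least $|S\cap H|$. Evaluating this at the halfspace $H^\star$ that realizes $\dep_\mathcal{F}(p)$ gives $\dep_\mathcal{F}(p)=|\{i:F_i\cap H^\star\neq\emptyset\}|\geq|S\cap H^\star|\geq\tdep_S(p)$, and since this holds for every $S$ the supremum bound follows.

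The substance is in the third part, where I would exhibit a family whose depth strictly exceeds every representative Tukey depth. The plan is to pass to an angular picture around the test point $p$, taken to be the origin in $\mathbb{R}^2$. For a convex set $F$ with $p\notin F$, the directions of its points form an arc $I$ of length $<\pi$, and the closed halfspaces through $p$ that meet $F$ are precisely those whose inner normal lies in a fattened arc of length $\pi+|I|$; a single representative point of $F$ instead corresponds to an arc of length exactly $\pi$ whose center may be placed anywhere in $I$. In these terms $\dep_\mathcal{F}(p)$ is the minimum covering multiplicity of the fattened arcs, while $\sup_S\tdep_S(p)$ is the best-over-placements minimum multiplicity of the thin arcs.

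Concretely I would take three chords of the unit circle, each subtending angular width $\pi/3$ as seen from the origin and centered at the directions $0$, $2\pi/3$, $4\pi/3$. The three fattened arcs then have length $\pi+\pi/3=4\pi/3$, and a direct computation shows they cover the circle with multiplicity at least $2$, so $\dep_\mathcal{F}(p)=2$. On the other hand, for any choice of representatives the three thin arcs have total length $3\pi$, so their average covering multiplicity is $3/2$ and some direction is covered with multiplicity at most $1$; hence $\tdep_S(p)\leq 1$ for every representative set $S$, and the supremum is at most $1<2$. I expect the main obstacle to be exactly this construction: one has to identify the correct translation into arc coverage and verify both the covering bound for the fattened arcs and the averaging bound for the thin ones. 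The same counting explains why two sets cannot create a gap (two $\pi$-arcs can already force multiplicity as high as the fattened arcs allow), so three sets are the smallest configuration that separates the two quantities.
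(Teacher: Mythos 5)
Your proof is correct and follows essentially the same route as the paper's for all three parts: parts one and two are verbatim the paper's argument (halfspace meets a singleton iff it contains the point; monotonicity of ``being hit'' plus a supremum), and your three chords each subtending $\pi/3$ at mutual angular distance $2\pi/3$ are exactly the paper's example of the three middle unit segments of an equilateral triangle of side $3$ (length-$1$ segments at distance $\sqrt{3}/2$ from the center). The only genuine difference is how you certify $\tdep_P(o)\leq 1$: the paper exhibits an explicit line through $o$ parallel to one side of the representative triangle, whereas you reformulate both depths as covering multiplicities of arcs on the circle of inner normals (fattened arcs of length $\pi+|I|$ for sets, arcs of length $\pi$ for points) and then use an averaging bound $3\pi<2\cdot 2\pi$. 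Your version is slightly more robust --- it shows the gap persists for any placement of the representatives without a case analysis, and the same bookkeeping explains why two sets cannot separate the quantities --- at the cost of justifying the arc dictionary (in particular that the minimizing halfspace may be taken with $p$ on its boundary, which you should state explicitly).
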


We postpone the proof of the first two statements to Section \ref{secSProofs}, but we justify the third part here.  We give an example on the plane that extends to higher dimensions. Let $abc$ be an equilateral triangle on the plane with sidelength equal to $3$ and center $o$. From it remove open disks of radius $1$ centered at each vertex. We define $\mathcal{F}$ as the family consisting of the three remaining segments. See Figure \ref{fig:fig1}.

\begin{figure}
	\centering
		\includegraphics[scale=0.6]{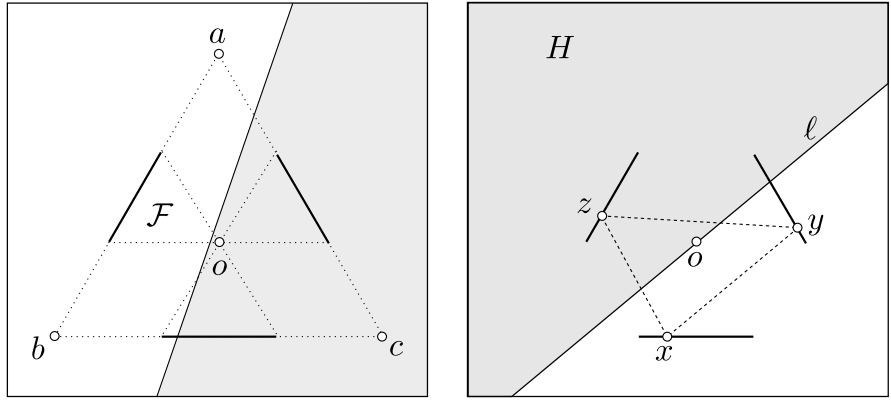}
	\caption{Example that shows that $\dep_{\mathcal{F}}(p)$ does not only depend on the Tukey depth of $p$ with respect to representative sets of the family $\mathcal{F}$}
	\label{fig:fig1}
\end{figure}
	
	Notice that any halfspace that contains $o$ always intersects two of the three segments, so according to Definition \ref{defDepth} we have $\dep_\mathcal{F}(o)=2$.
	
	On the other hand, let $P=\{x,y,z\}$ be a set of three points, one from each segment. Since the segments $xy$, $yz$ and $zx$ are not intersecting, there is one of them that does not contain $o$, say $xy$. Consider the line $\ell$ through $o$ parallel to $zy$ and $H$ the halfspace defined by $\ell$ that does not contain the segment $xy$. We have that $H$ contains $o$, but at most one of the points $x$, $y$, $z$. Therefore, the Tukey depth of $o$ with respect to $P$ is at most $1$. Thus, the supremum over all the triples $\{x,y,z\}$ is also at most $1$.
	
\subsection{Center regions and proof of Theorem \ref{thmRCen}}

We turn our attention to the level sets of $\dep_\mathcal{F}$. We define the \textit{$r$-center with respect to the family $\mathcal{F}$} as the set of points of $\mathbb{R}^d$ with depth at least $r$ with respect to $\mathcal{F}$. We denote it by $C_r(\mathcal{F})$.

The analogous definition for Tukey depth has been widely studied. In that context, it is straightforward to see that the $r$-center of a set of points $S$ is the intersection of all the closed halfspaces whose complement contains at most $r-1$ points of $S$ (see \cite{Matousek2002} for further details). Agarwal et al. \cite{Agarwal2004} studied $C_r(\mathcal{P})$ further and showed that it is a convex polytope whose faces are hyperplanes spanned by at most $d$ points of $S$ and for which the halfspace opposite to the $r$-center contains exactly $r-1$ points.

The $r$-center for Definition \ref{defDepth} is not necessarily a polytope and in such examples it cannot be expressed as the intersection of a finite number of halfspaces. Nevertheless, as stated in Theorem \ref{thmRCen}, $C_r(\mathcal{F})$ is always convex. To show this we will prove express the $r$-center as a (possibly infinite) intersection of halfspaces.

We now introduce some auxiliary definitions that will be helpful for proving Theorem \ref{thmRCen}. Let $u$ be a unit vector in $\mathbb{R}^d$ and $I$ a real interval. We define $P_{u,I}:=\{v\in \mathbb{R}^d: \langle u,v \rangle \in I\}$, and we call it the \textit{the plank with direction $u$ corresponding to $I$}. Geometrically $P_{u,I}$ is a region between two parallel hyperplanes (one or both possible at infinity).

If we project a convex set $F$ to the line through $0$ with direction $u$ then its image is an interval $I$. Therefore, any hyperplane in the plank $P_{u,I}$ is transversal to $F$. In the same spirit we present the following lemma.

\begin{lemma}
  \label{lemPlank}
  Let $n$ be a positive integer and $r$ a real number in the interval $[0,n]$. Let $\mathcal{F}$ be a family of $n$ convex sets of $\mathbb{R}^d$. Then for each direction $u$ there exists a closed plank $P$ perpendicular to it such that:
	
	\begin{itemize}
		\item For each hyperplane in the plank, each of the halfspaces it defines \emph{intersects at least}  $r$ of the sets of $\mathcal{F}$.
		\item For each of the two bounding hyperplanes of the plank, the halfspace defined by it that contains the plank, \emph{contains more} than $n-r$ sets of $\mathcal{F}$.
	\end{itemize}
\end{lemma}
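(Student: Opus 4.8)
The plan is to reduce the statement to a purely one-dimensional counting problem by projecting everything onto the line spanned by $u$, and then to produce the plank explicitly from two order statistics.

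First I would fix the direction $u$ and, for each $F_i\in\mathcal{F}$, record the two extreme values of its projection, $a_i:=\inf_{v\in F_i}\langle u,v\rangle$ and $b_i:=\sup_{v\in F_i}\langle u,v\rangle$ (allowing $\pm\infty$ when $F_i$ is unbounded in the relevant direction). Since $F_i$ is convex, its projection is precisely the interval with endpoints $a_i$ and $b_i$. A hyperplane perpendicular to $u$ at height $c$, namely $\{v:\langle u,v\rangle=c\}$, then behaves combinatorially: its upper halfspace $\{v:\langle u,v\rangle\geq c\}$ meets $F_i$ exactly when $b_i\geq c$, its lower halfspace meets $F_i$ exactly when $a_i\leq c$, while $F_i$ is contained in the upper (resp. lower) halfspace exactly when $a_i\geq c$ (resp. $b_i\leq c$). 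In this way both requirements of the lemma turn into assertions about the position of $c$ relative to the $2n$ numbers $a_1,\dots,a_n,b_1,\dots,b_n$ (up to the mild convention needed when a supremum is not attained, or simply taking the sets closed).

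Next I would exhibit the plank. Set $q:=\cei{r}$, let $s$ be the $q$-th smallest of the values $a_1,\dots,a_n$, let $t$ be the $q$-th largest of the values $b_1,\dots,b_n$, and take $P:=\{v:s\leq\langle u,v\rangle\leq t\}$. The two monotone counting functions $c\mapsto|\{i:a_i\leq c\}|$ and $c\mapsto|\{i:b_i\geq c\}|$ reduce the first bullet to its endpoints: at $c=s$ at least $q\geq r$ of the $a_i$ satisfy $a_i\leq s$, and at $c=t$ at least $q\geq r$ of the $b_i$ satisfy $b_i\geq t$, so by monotonicity every hyperplane of $P$ has both halfspaces meeting at least $r$ sets. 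For the second bullet, the choice of order statistic guarantees that fewer than $r$ of the $a_i$ are strictly below $s$ and fewer than $r$ of the $b_i$ are strictly above $t$; equivalently, more than $n-r$ sets lie in the upper halfspace bounded by $s$ and more than $n-r$ sets lie in the lower halfspace bounded by $t$. Here the arithmetic $q-1<r\leq q$ is exactly what makes the strict and non-strict counts line up, and ties among the $a_i$ or $b_i$ only help.

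The delicate point — the one I expect to require the most care — is whether the two bounding heights actually satisfy $s\leq t$, so that $P$ is a genuine (possibly unbounded) plank rather than empty. Infinite endpoints are harmless: an order statistic equal to $\pm\infty$ simply records a bounding hyperplane ``at infinity'', and the corresponding halfspace then contains all $n$ sets. The case $s>t$ can genuinely occur — for instance when the shadows split into two clusters that no single hyperplane can reach from both sides — and it is precisely the situation in which no nonempty plank can satisfy the first bullet. In that case I would take $P=\emptyset$; this should correspond to the $r$-center $C_r(\mathcal{F})$ being empty, so the degenerate plank loses no information in the intended application to Theorem \ref{thmRCen}. Once this bookkeeping is settled, the verification is the elementary one-dimensional counting argument above.
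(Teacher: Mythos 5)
Your proof is correct and is essentially the paper's argument in different clothing: the $q$-th order statistics $s$ and $t$ of the projected endpoints are exactly the endpoints of the interval $f^{-1}([r,n])$ that the paper extracts from its two monotone counting functions $f^{+}$ and $f^{-}$, and you verify the second bullet by the same ``fewer than $r$ endpoints fall strictly outside'' count. The degenerate case $s>t$ that you flag is the same possibility (an empty $f^{-1}([r,n])$) that the paper silently subsumes under ``closed interval $[a,b]$'', so you are not introducing any gap the paper's own proof does not already have.
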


\begin{proof}
  For each real number $x$ define $f^+(x)$ as the number of sets of $\mathcal{F}$ that intersect the halfspace $H^+_x:=\{v\in \mathbb{R}^d: \langle u,v \rangle \geq x\}$ and $f^-$ as the number of sets of $\mathcal{F}$ that intersect the halfspace $H^-_x:=\{v\in \mathbb{R}^d: \langle u,v \rangle \leq x\}$. See Figure \ref{fig:fig2}. Define $f:=\min(f^+,f^-)$. Since $f^+$ is decreasing and $f^-$ is increasing  the function $f$ is unimodal, concave, with minimum $0$ and bounded above by $n$. Therefore, $f^{-1}([r,n])$ is a closed interval $[a,b]$. By definition, for each hyperplane in the plank $P_{u,[a,b]}$ each of the halfspaces it defines intersect at least $r$ sets of $\mathcal{F}$.
	
 To finish the proof, consider one of the bounding hyperplanes of the plank, say the one through $au$. By closedness, for a small value of $\epsilon$, the halfspaces $H^{+}_a$ and $H^{+}_{a-\epsilon}$ contain the same number of sets of $\mathcal{F}$. Since $H^{-}_{a-\epsilon}$ intersects less than $r$ sets of $\mathcal{F}$, then $H^{+}_{a-\epsilon}$ (and thus also $H^+_a$) contains more than $n-r$ sets of $\mathcal{F}$.
	
\begin{figure}
	\centering
		\includegraphics[scale=0.8]{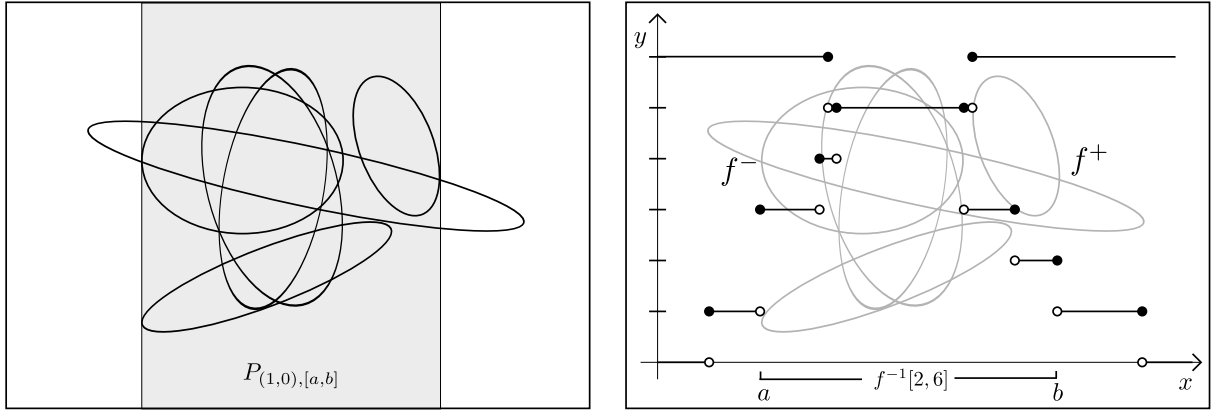}
	\caption{Example of projection of $6$ convex sets to the $x$-axis. The image on the left shows the plank corresponding to points with depth at least $2$ in direction $x$. The image on the right shows the functions $f^-$ and $f^+$ overlapped on the family $\mathcal{F}$.}
	\label{fig:fig2}
\end{figure}
\end{proof}

Let $\mathcal{P}_r(\mathcal{F})$ denote the family of planks obtained by varying the direction $u$ in Lemma \ref{lemPlank}. The following lemma relates this family to the $r$-center with respect to $\mathcal{F}$.

\begin{lemma}
\label{lemInt}
	For any family $\mathcal{F}$ of convex sets we have that
	
	\[
	C_r(\mathcal{F})=\bigcap \mathcal{P}_r(\mathcal{F})
	\]
	
\end{lemma}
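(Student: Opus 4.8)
The plan is to prove the two inclusions $C_r(\mathcal{F}) \subseteq \bigcap \mathcal{P}_r(\mathcal{F})$ and $\bigcap \mathcal{P}_r(\mathcal{F}) \subseteq C_r(\mathcal{F})$ separately, working throughout with the following reformulation of the depth function: a point $p$ belongs to $C_r(\mathcal{F})$ if and only if \emph{every} closed halfspace containing $p$ intersects at least $r$ sets of $\mathcal{F}$. Since every closed halfspace can be written as $H^+_c = \{v : \langle u, v\rangle \geq c\}$ (or, after flipping the sign of $u$, as $H^-_c$) for a unit direction $u$ and a level $c$, I will analyze each halfspace through the one-dimensional counting functions $f^+, f^-$ and the interval $[a,b] = f^{-1}([r,n])$ introduced in the proof of Lemma \ref{lemPlank}.

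For the inclusion $\bigcap \mathcal{P}_r(\mathcal{F}) \subseteq C_r(\mathcal{F})$, I would fix $p \in \bigcap \mathcal{P}_r(\mathcal{F})$ together with an arbitrary closed halfspace $H$ containing $p$, say $H = H^+_c$ for its direction $u$ and level $c$, so that $\langle u, p\rangle \geq c$. Since $p$ lies in the plank $P_{u,[a,b]}$ associated to $u$, we also have $\langle u, p\rangle \leq b$, hence $c \leq b$. The bounding hyperplane at level $b$ belongs to the closed plank, so by the first bullet of Lemma \ref{lemPlank} the halfspace $H^+_b$ intersects at least $r$ sets, i.e. $f^+(b) \geq r$. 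Because $f^+$ is non-increasing and $c \leq b$, we get $f^+(c) \geq f^+(b) \geq r$, so $H$ meets at least $r$ sets, as required.

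For the reverse inclusion I would argue by contraposition: if $p \notin \bigcap \mathcal{P}_r(\mathcal{F})$ then $p$ misses some plank $P_{u,[a,b]}$, so $\langle u, p\rangle > b$ or $\langle u, p\rangle < a$, and I must exhibit a halfspace through $p$ meeting fewer than $r$ sets. In the case $\langle u,p\rangle > b$, set $x = \langle u, p\rangle$ and take $H^+_x$, which contains $p$. Since $x$ lies outside $[a,b] = f^{-1}([r,n])$ we have $f(x) = \min(f^+(x), f^-(x)) < r$; the monotonicity of $f^-$ gives $f^-(x) \geq f^-(b) \geq f(b) \geq r$, so the value below $r$ must be $f^+(x)$. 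Thus $H^+_x$ is a halfspace containing $p$ that intersects fewer than $r$ sets, whence $\dep_\mathcal{F}(p) < r$ and $p \notin C_r(\mathcal{F})$. The symmetric case $\langle u, p\rangle < a$ is handled with $H^-_x$ in the same way.

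The only genuinely delicate point, and the one I would treat most carefully, is deciding \emph{which} of the two halfspaces $H^+_x, H^-_x$ witnesses depth below $r$ when $p$ falls outside a plank. This is exactly where the unimodality of $f = \min(f^+,f^-)$ and the monotonicity of $f^\pm$ from Lemma \ref{lemPlank} are needed: they guarantee that on the side where $x$ exceeds $b$ the function $f^-$ stays $\geq r$, forcing $f^+$ to drop below $r$, and symmetrically below $a$. Everything else reduces to the monotonicity of the counting functions and the closedness of the plank, so I expect no further obstacle.
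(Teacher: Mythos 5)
Your proposal is correct and follows essentially the same route as the paper: both inclusions are handled through the planks of Lemma \ref{lemPlank}, with the forward inclusion reduced to a halfspace bounded by a hyperplane inside the plank (you phrase this via monotonicity of $f^+$, the paper by noting $H$ contains such a halfspace) and the reverse inclusion exhibiting a halfspace through $p$ meeting fewer than $r$ sets. The only difference is cosmetic: where the paper cites the second bullet of Lemma \ref{lemPlank}, you re-derive its content directly from $[a,b]=f^{-1}([r,n])$ and the monotonicity of $f^{\pm}$.
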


\begin{proof}
  First let $p$ be a point in $\bigcap\mathcal{P}_r(\mathcal{F})$ and $H$ a closed halfspace that contains $p$ with bounding hyperplane $\Pi$. Let $u$ be a unit vector perpendicular to $\Pi$ and $P$ the plank given by Lemma \ref{lemPlank}. By hypothesis, $p$ is in $P$. The halfspace $H$ contains one of the halfspaces $H'$ defined by the hyperplane through $p$ perpendicular to $u$. By the first part of the lemma, $H'$ contains at least $r$ sets from $\mathcal{F}$ and thus the same is true for $H$. This proves that $p$ is in $C_r(\mathcal{F})$.

	Now let $p$ be a point that is not in $\bigcap\mathcal{P}_r(\mathcal{F})$. This means that for some unit vector $u$ the point $p$ is not in the plank $P$ given by Lemma \ref{lemPlank}. Let $\Pi$ be the bounding hyperplane of $P$ closest to $p$ and $\Pi'$ a hyperplane that separates $p$ from $\Pi$. By the second part of the lemma, the closed halfspace defined by $\Pi$ that contains $P$ has more than $n-r$ sets of $\mathcal{F}$ and therefore the closed halfspace defined by $\Pi'$ that contains $p$ has less than $r$ sets. This shows that $p$ is not in $C_r(\mathcal{F})$.
	
\end{proof}

We are ready to prove Theorem \ref{thmRCen}.

\begin{proof}[Proof of Theorem \ref{thmRCen}]
  By Lemma \ref{lemPlank} and Lemma \ref{lemInt} we have that the $r$-center is the intersection of planks. Each plank is convex, and thus the $r$-center is convex.
\end{proof}


\section{Intersection patterns and centerpoint results}
\label{secCPT}

One of the cornerstones of discrete geometry is Rado's centerpoint theorem. In the terms used above, it states that for a set of points $P$ there always exists a point with Tukey depth at least $\frac{|P|}{d+1}$. Moreover, this result is optimal as there are sets of points with no point with Tukey depth greater than this number. As an easy consequence of this result and Proposition \ref{propTukey} we have that the analog result is also true for Definition \ref{defDepth}.

The aim of this section is to show that this basic bound can be sharpened by knowing more information on the intersection pattern of $\mathcal{F}$. Let us give a trivial but illustrative example. Suppose that we know that the familily $\mathcal{F}$ is $(d+1)$-intersecting. Then by Helly's theorem the whole family $\mathcal{F}$ has non-empty intersection. If $p$ is any point in the intersection then any closed halfspace that contains $p$ intersects every set of $\mathcal{F}$. Thus we have managed to improve the best possible value of $\dep_\mathcal{F}$ from $\frac{|\mathcal{F}|}{d+1}$ to $|\mathcal{F}|$.

Intuitively, if the convex sets of the family intersect more, then we should be able to find points with higher depth. To formalize this we introduce the following definition.

\begin{definition}
	Let $d$ be a positive integer and $k$ an integer in $[d]$. We define $\alpha_{d,k}$ as the largest real number such that any finite family $\mathcal{F}$ of $k$-intersecting convex sets in $\mathbb{R}^d$ has a point $p$ for which $\dep_\mathcal{F}(p) \geq \alpha_{d,k}\cdot |\mathcal{F}|$.
\end{definition}

The discussion above shows that $\alpha_{d,1}=\frac{1}{d+1}$ and $\alpha_{d,d+1}=1$. For general values of $d$ and $k$ we have the following lower bound for $\alpha_{d,k}$.

\begin{proposition}
\label{propBBound}
	Let $\mathcal{F}$ be a finite family of $k$-intesecting convex sets in $\mathbb{R}^d$. Then there exists a point $p$ whose depth $r:=\dep_\mathcal{F}(p)$ satisfies:
	
	\[
		\binom{r}{k}\geq \frac{1}{d+1}\binom{|\mathcal{F}|}{k}.
	\]
\end{proposition}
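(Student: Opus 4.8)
The plan is to reduce the statement to a single application of Rado's centerpoint theorem to a finite multiset of points extracted from the intersection pattern of $\mathcal{F}$. Write $n := |\mathcal{F}|$. Using the $k$-intersecting hypothesis, for every $k$-element subset $T \subseteq \mathcal{F}$ I would choose a point $p_T$ in the common intersection $\bigcap_{F \in T} F$. Collecting these choices gives a multiset $P$ of exactly $\binom{n}{k}$ points (counted with multiplicity, since distinct subsets may land on the same representative).

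Next I would apply Rado's centerpoint theorem to $P$ to obtain a point $q$ whose Tukey depth with respect to $P$ is at least $\frac{1}{d+1}|P| = \frac{1}{d+1}\binom{n}{k}$; that is, every closed halfspace containing $q$ captures at least this many points of $P$. I would then take $p := q$ as the desired point and show its $\dep_\mathcal{F}$ meets the bound.

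The crux is to convert the number of captured representative points into a bound on the number of intersected sets. Fix any closed halfspace $H$ containing $q$ and let $r_H$ denote the number of sets of $\mathcal{F}$ that meet $H$. The key observation is that whenever $p_T \in H$, every $F \in T$ satisfies $p_T \in F \cap H$, so all $k$ sets of $T$ lie among the $r_H$ sets intersected by $H$. Hence the subsets $T$ with $p_T \in H$ are all drawn from this collection of $r_H$ sets, so at most $\binom{r_H}{k}$ points of $P$ can lie in $H$. Combining with the centerpoint bound yields $\binom{r_H}{k} \geq \frac{1}{d+1}\binom{n}{k}$ for every closed halfspace $H$ containing $q$, and specializing to the halfspace that realizes $r := \dep_\mathcal{F}(q)$ gives the claimed inequality.

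The main obstacle I anticipate is the bookkeeping of multiplicities: I must invoke Rado's theorem in its multiset (weighted) form, or alternatively perturb the $p_T$ into general position, apply the theorem to the resulting distinct points, and pass to the limit. Care is also needed to ensure the estimate $\binom{r_H}{k}$ counts $k$-subsets of intersected sets rather than points. Beyond these points, the argument is a direct translation between the combinatorial count of $k$-subsets and the geometric count of sets met by $H$.
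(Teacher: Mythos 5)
Your proposal is correct and follows essentially the same argument as the paper: pick a representative point in each $k$-wise intersection, apply Rado's centerpoint theorem to the resulting $\binom{|\mathcal{F}|}{k}$ points, and observe that a halfspace meeting only $r$ sets can capture at most $\binom{r}{k}$ of the representatives. Your extra attention to multiplicities is a reasonable refinement of a detail the paper passes over silently, but it does not change the route.
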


\begin{proof}
  For each collection $\mathcal{G}$ of $k$ sets from $\mathcal{F}$ we choose a point in its intersection. This gives us a set $S$ of $\binom{|\mathcal{F}|}{k}$ points. We apply Rado's centerpoint theorem to $S$ to obtain a point $p$ with Tukey depth at least $\frac{1}{d+1}\binom{|\mathcal{F}|}{k}$.
	
	Let $H$ be a closed halfspace that contains $p$ and intersects the minimal number $r$ of sets from $\mathcal{F}$. Then it can contain at most $\binom{r}{k}$ points from $S$. Since $p$ is a Tukey centerpoint for $S$ we have:
	
	\[
		\binom{r}{k}\geq |S\cap H| \geq \frac{1}{d+1}\binom{|\mathcal{F}|}{k}.
	\]
\end{proof}

\begin{proof}[Proof of Theorem \ref{thmAsy}]
We use Proposition \ref{propBBound} and standard bounds on binomial coefficients:

\[
	\frac{r^k}{k!}\geq \binom{r}{k}\geq \frac{1}{d+1}\binom{|\mathcal{F}|}{k}\geq \frac{1}{d+1}\cdot \frac{(|\mathcal{F}|-k+1)^k}{k!}
\]

\noindent and thus

\[
	\frac{r}{|\mathcal{F}|}\geq \frac{1}{\sqrt[k]{d+1}} \cdot \left(1-\frac{k-1}{|\mathcal{F}|}\right) = \Omega\left(\frac{1}{\sqrt[k]{d+1}}\right) 
\]

\end{proof}

\subsection{A related hitting-set problem}

The bound given in Proposition \ref{propBBound} is useful in some situations, but it is not sharp. As an example, for pairwise intersecting sets on the plane it gives the bound $\alpha_{2,2}\geq \frac{1}{\sqrt{3}}$, but as we will see later, we actually have $\alpha_{2,2}=\frac{2}{3}$. Our main tool to give better bounds for $\alpha_{d,k}$ is a key relationship to a purely combinatorial parameter.

\begin{definition}
	Let $m$ be a positive integer and $k$ an integer in $[m]$. We define $\beta_{m,k}$ as the smallest real number $\beta$ for which the following holds. For any finite set $X$ and any $m$ of its subsets $A_1, \ldots, A_m$ with $|A_i|>\beta\cdot |X|$ ($i=1,2,\ldots,m$) there exists a hitting set of size at most $k$.	
\end{definition}

Theorem $\ref{thmEquiv}$ establishes the relationship between $\alpha$ and $\beta$. Before we prove it, we present an auxiliary geometric result on non-intersecting planks. It does not require the $k$-intersecting hypothesis. See Figure \ref{fig:fig3} for an accompanying example on the plane.

\begin{proposition}
  \label{propSimplex}
	Let $\mathcal{F}$ be a finite family of convex sets in $\mathbb{R}^d$ and $r$ an integer in $[|\mathcal{F}|]$. Then either
	
	\begin{itemize}
		\item there exists a point with depth at least $r$ or
		\item we can find $d+1$ halfspaces with empty intersection each of which contains more than $|\mathcal{F}|-r$ sets of $\mathcal{F}$.
	\end{itemize}
\end{proposition}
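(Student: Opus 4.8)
The plan is to characterize the $r$-center $C_r(\mathcal{F})$ as an intersection of ``good'' halfspaces, where a halfspace is good if it contains (fully) more than $|\mathcal{F}|-r$ sets of $\mathcal{F}$, and then to run a Helly-type dichotomy: the first alternative holds precisely when this intersection is nonempty, and when it is empty, Helly's theorem should produce $d+1$ good halfspaces witnessing the second alternative.

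First I would invoke Lemma \ref{lemInt} to write $C_r(\mathcal{F})=\bigcap\mathcal{P}_r(\mathcal{F})$ as the intersection of the planks of Lemma \ref{lemPlank}, one for each direction $u$. Each such plank is the intersection of its two bounding closed halfspaces, and by the second bullet of Lemma \ref{lemPlank} each of these halfspaces contains more than $|\mathcal{F}|-r$ sets. Thus $C_r(\mathcal{F})$ is the intersection of a family $\mathcal{H}$ of good halfspaces. If $C_r(\mathcal{F})\neq\emptyset$, any point in it has depth at least $r$ and the first alternative holds, so I may assume $C_r(\mathcal{F})=\bigcap\mathcal{H}=\emptyset$ and aim to extract $d+1$ members of $\mathcal{H}$ with empty intersection.

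The extraction is where Helly's theorem enters, and it is also the main obstacle, because $\mathcal{H}$ is infinite (it is indexed by the sphere of directions) and the naive infinite version of Helly is false for halfspaces --- the nested rays $[n,\infty)$ in $\mathbb{R}$ have empty intersection although no finite subfamily does. I would therefore argue by contradiction: assume that no $d+1$ members of $\mathcal{H}$ have empty intersection, so that by the finite Helly theorem every finite subfamily of $\mathcal{H}$ has a common point. The planks coming from the coordinate directions $e_1,\dots,e_d$ contribute $2d$ halfspaces of $\mathcal{H}$ whose common intersection is a box; being a finite subfamily it is nonempty, and for a family of bounded sets it is also bounded --- this is the quantitative content of Proposition \ref{propLim}, since sufficiently far from $\mathcal{F}$ every halfspace meets fewer than $r$ sets. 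Call this compact box $B$. Intersecting each member of $\mathcal{H}$ with $B$ gives closed subsets of $B$ which inherit the finite intersection property, so by compactness $\bigcap\mathcal{H}\cap B\neq\emptyset$, contradicting $C_r(\mathcal{F})=\emptyset$. Hence some subfamily of at most $d+1$ members of $\mathcal{H}$ has empty intersection, and enlarging it arbitrarily to exactly $d+1$ good halfspaces establishes the second alternative.

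Finally I would address the case of unbounded sets, which is the one delicate point in the compactness step, since then the coordinate box need not be bounded. I expect this can be handled either by truncating each set of $\mathcal{F}$ to a sufficiently large common ball without altering which good halfspaces arise, or by appealing to the version of Helly's theorem for closed convex families in which a single finite subfamily has bounded intersection. I would also fix conventions for the open/closed distinction once at the outset, so that the bookkeeping between ``intersects at least $r$ sets'' and ``contains more than $|\mathcal{F}|-r$ sets'' stays routine throughout.
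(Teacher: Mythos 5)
Your proposal is correct and follows essentially the same route as the paper: both express $C_r(\mathcal{F})$ as the intersection of the planks from Lemmas \ref{lemPlank} and \ref{lemInt}, and then combine Helly's theorem with a compactness argument to extract $d+1$ halfspaces with empty intersection, each good by the second bullet of Lemma \ref{lemPlank}. The only real difference is cosmetic: you apply Helly once, directly to the family of bounding halfspaces, and pad the resulting subfamily to size $d+1$, whereas the paper first finds $d+1$ non-intersecting planks and then uses an affine-independence perturbation to select one bounding halfspace per plank; your padding observation makes that detour unnecessary, and you are in fact more explicit than the paper (which only says ``by a compactness argument'') about the compactness step and the unbounded case.
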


\begin{figure}
	\centering
		\includegraphics[scale=0.7]{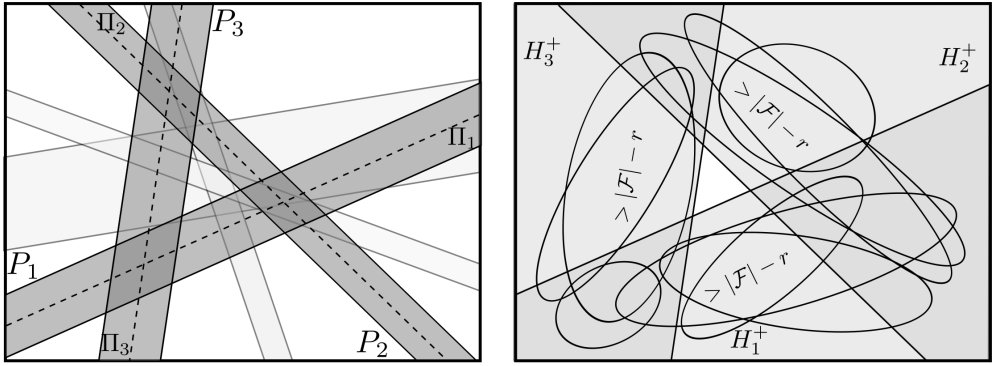}
	\caption{Finding non-intersecting hyperplanes, each containing more than $|\mathcal{F}|-r$ sets of $\mathcal{F}$}
	\label{fig:fig3}
\end{figure}

\begin{proof}
  We define $\mathcal{P}_r(\mathcal{F})$ as in Section \ref{secProp}. Suppose for a moment that $\mathcal{P}_r(\mathcal{F})$ is $(d+1)$-intersecting. By Helly's theorem, any finite collection of $\mathcal{P}_r$ would be intersecting. By a compactness argument, the whole family would be intersecting. Therefore, by Lemma \ref{lemInt} the center region $C_r(\mathcal{F})$ would be non-empty. Thus if the first conclusion is not satisfied it is because we can find $d+1$ planks with empty intersection. Call them $P_1$, $P_2$, $\ldots$, $P_{d+1}$. Using that the planks are closed, we may assume that their defining directions are affinely independent.
	
	For each plank $P_i$ let $\Pi_i^+$ and $\Pi_i^-$ be its boundary hyperplanes, $\Pi_i$ the plane that lies in between them exactly at the middle and let $H_i^+$ (resp. $H_i^-$) be the halfspace with boundary $\Pi_i^+$ (respectively $\Pi_i^-$) that contains $\Pi_i$. Notice that $P_i=H_i^+\cap H_i^-$, so again by Helly's theorem a subset of at most $d+1$ of these halfspaces must have empty intersection. Now, by the affine independence of the directions, the intersection of any $d$ planes $\Pi_i$ is non-empty. Therefore, the $(d+1)$-family of non-intersecting halfspaces has exactly one halfspace for each index $i$. By relabeling we may assume that $\cap_{i\in[d+1]}H_i^+$ is empty.
	
  We end the proof by noting that by the second part of Lemma \ref{lemPlank} each halfspace $H_i^+$ contains more than $|\mathcal{F}|-r$ sets of $\mathcal{F}$.
\end{proof}

We are ready to prove the key relation between $\alpha$ and $\beta$.

\begin{proof}[Proof of Theorem \ref{thmEquiv}]
	We first show that $\alpha_{d,k}\geq 1-\beta_{d+1,k}$. We proceed by contradiction. Thus lets assume that there exists a finite family $\mathcal{F}$ of convex sets in $\mathbb{R}^d$ that is $k$-intersecting with no point of depth $1-\beta_{d+1,k}$. Let $H_1, \ldots, H_{d+1}$ be the family of halfspaces given by Proposition \ref{propSimplex}. For each $i \in [d+1]$ we define $A_i$ as the set of convex sets contained in $H_i$. By Proposition \ref{propSimplex}, each $A_i$ has size larger than
	
	\[
		|\mathcal{F}|-(1-\beta_{d+1,k})\cdot |\mathcal{F}| = \beta_{d+1,k} \cdot |\mathcal{F}|.
	\]
	
	By the definition of $\beta_{d+1,k}$, there is a hitting set for $\{A_1,A_2,\ldots,A_{d+1}\}$ of size at most $k$. In other words, we can find at most $k$ sets of $\mathcal{F}$ whose intersection is completely contained in
	
	\[
		H_1\cap H_2 \cap \ldots \cap H_{d+1}=\emptyset.
	\]
	
	This is a contradiction to the $k$-intersecting hypothesis.
	
	Now we show that $\beta_{d+1,k}\leq 1-\alpha_{d,k}$. Once again, we proceed by contradiction and suppose that there exists a finite set $X$ with $d+1$ subsets $A_1,\ldots,A_{d+1}$ such that every one of them has size larger than $(1-\alpha_{d,k})\cdot |X|$ but with no hitting set of size $k$ for $\mathcal{A}:=\{A_1,\ldots,A_{d+1}\}$. From here we will construct a family $\mathcal{F}$ of convex sets in $\mathbb{R}^d$.
	
	We begin with a set of $d+1$ points in general position in $\mathbb{R}^d$, say $\mathcal{P}=\{P_1, P_2, \ldots, P_{d+1}\}$. For each $i\in [d+1]$ we define $C_i$ as the convex hull of $\mathcal{P}\setminus \{P_i\}$.  Note that the sets $C_i$ are $d$-intersecting, but the intersection of all of them is empty.	Now, for each $x\in X$ we set $I_x$ as the family of indexes $i$ for which $x\in A_i$ and
	
	\[
		F_x = \bigcap_{i\in I_x} C_i.
	\]
	
	Finally, we set $\mathcal{F}:=\{F_x\}_{x\in X}$. Each set $F_x$ is the intersection of convex sets, and thus is convex (it is actually a face of the simplex spanned by $\mathcal{P}$). We claim that $\mathcal{F}$ is $k$-intersecting. Indeed no $k$-set $Y$ of $X$, is a hitting set for $\mathcal{A}$. Then:
	
	\[
		\bigcap_{x\in Y} F_x = \bigcap_{i:\; i\in I_x \text{ for some } x\in Y} C_i \supsetneq \bigcap_{i\in [d+1]} C_i = \emptyset.
	\]
	
	So we may use the definition of $\alpha_{d,k}$ and find a point $p$ of depth at least $\alpha_{d,k}$ with respect to $\mathcal{F}$. As stated above, the sets $C_i$ have empty intersection, so without loss of generality we may suppose $p$ is not in $C_1$. Then, we can find a closed halfspace $H$ that does not intersect $C_1$ and whose boundary hyperplane $\Pi$ contains the point $p$. All the sets $F_x$ with $x\in A_1$ are contained in $C_1$. By hypothesis, $|A_1|> (1-\alpha_{d,k})\cdot |\mathcal{F}|$. Then the halfspace $H$ intersects less than
	
	\[
		|\mathcal{F}|-(1-\alpha_{d,k})\cdot |\mathcal{F}| = \alpha_{d,k} \cdot |\mathcal{F}|
	\]
	
	\noindent sets of $\mathcal{F}$. This contradiction to the definition of $\alpha_{d,k}$ finishes the proof.
\end{proof}
\subsection{Exact values and better bounds for $\beta_{m,k}$}

The follow proposition gives the exact value for $\beta_{k+1,k}$. Notice that by Theorem \ref{thmEquiv}, this proves Theorem \ref{thmExact}.

\begin{proposition}
	The value of $\beta_{k+1,k}$ is $\frac{1}{k+1}$.
\end{proposition}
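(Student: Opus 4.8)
The plan is to reduce the problem to a clean combinatorial dichotomy and then prove the two inequalities $\beta_{k+1,k}\leq\frac{1}{k+1}$ and $\beta_{k+1,k}\geq\frac{1}{k+1}$ separately. The key observation I would establish first is the following characterization: given $k+1$ subsets $A_1,\ldots,A_{k+1}$ of a finite set $X$, there is \emph{no} hitting set of size at most $k$ if and only if the sets $A_1,\ldots,A_{k+1}$ are pairwise disjoint. The reverse direction is immediate: if the sets are pairwise disjoint, then any hitting set must select a distinct element for each set, forcing its size to be at least $k+1$. For the forward direction I would argue the contrapositive: if some two sets, say $A_i$ and $A_j$, share a common element $x$, then $x$ hits both of them simultaneously, and choosing one additional element from each of the remaining $k-1$ sets produces a hitting set of size at most $1+(k-1)=k$.

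For the upper bound $\beta_{k+1,k}\leq\frac{1}{k+1}$, I would verify that the defining implication holds at $\beta=\frac{1}{k+1}$. Suppose $A_1,\ldots,A_{k+1}$ are subsets of $X$ with $|A_i|>\frac{1}{k+1}|X|$ for every $i$, yet admit no hitting set of size at most $k$. By the characterization the sets are pairwise disjoint, so summing cardinalities gives $|X|\geq\sum_{i=1}^{k+1}|A_i|>(k+1)\cdot\frac{1}{k+1}|X|=|X|$, a contradiction. Hence a hitting set of size at most $k$ always exists, proving that $\frac{1}{k+1}$ is a valid choice of $\beta$.

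For the lower bound $\beta_{k+1,k}\geq\frac{1}{k+1}$, I would exhibit, for every $\beta<\frac{1}{k+1}$, a configuration witnessing that the implication fails. Take $X$ a set of size $k+1$ and let $A_1,\ldots,A_{k+1}$ be its $k+1$ singletons. These are pairwise disjoint, so by the characterization they admit no hitting set of size at most $k$; yet each satisfies $|A_i|=1>\beta(k+1)=\beta|X|$. Thus no $\beta<\frac{1}{k+1}$ can serve in the definition of $\beta_{k+1,k}$. Combining the two bounds, and using that the defining implication is monotone in $\beta$ (a larger $\beta$ only strengthens the hypothesis $|A_i|>\beta|X|$, so the set of admissible values is closed upward), the smallest admissible value is exactly $\frac{1}{k+1}$.

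The argument is short, and the only real content is spotting the pairwise-disjointness characterization; once it is in place, the upper bound is a one-line double count and the lower bound is an explicit singleton example, so I do not anticipate a substantial obstacle beyond isolating that dichotomy.
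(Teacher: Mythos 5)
Your proof is correct and follows essentially the same route as the paper: a counting argument showing that two of the $k+1$ sets must intersect (so a common point plus one point from each remaining set gives a hitting set of size $k$), together with the singleton family on $[k+1]$ as the extremal example. The only cosmetic caveat is that your stated equivalence (no hitting set of size $\leq k$ iff pairwise disjoint) can fail when some $A_i$ is empty, but this does not matter here since the hypothesis $|A_i|>\frac{1}{k+1}|X|$ forces all sets to be non-empty.
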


\begin{proof}

If we have $k+1$ subsets of a finite set $X$ and each of them has more than $\frac{1}{k+1}\cdot |X|$ elements, then two of them must intersect, and then by choosing a point in the intersection and $k-1$ arbitrary points, one from each of the remaining sets, we get a hitting set with $k$ elements. To show that this is optimal, we set $X=[k+1]$ and for each $i\in[k+1]$ we set $A_i=\{i\}$. Each family has exactly $\frac{1}{k+1}\cdot |X|$ of the elements of $X$ but there is no hitting set of size $k$ to the family.

\end{proof}

 A general bound for $\beta_{m,k}$ (Theorem \ref{thmAppCPT}) follows immediately from Proposition \ref{propBBound} and Theorem \ref{thmEquiv}. If $k$ is constant and $m$ is large this bound is in $1-\Omega\left(\frac{1}{\sqrt[k]{m}}\right)$. This is the best bound that we can expect asymptotically. Fix a positive integer $n$ larger than $k$ and consider the family $\mathcal{A}$ of all $(n-k)$-subsets of $n$. This is a family of $m=\binom{n}{k}\leq \frac{n^k}{k!}$ sets, with no hitting set of size $k$ and each of which the proportion of its size to $n$ is

\[
  \frac{n-k}{n}=1-\frac{k}{n}\geq 1-\frac{k}{\sqrt[k]{k!m}}=1-O\left(\frac{1}{\sqrt[k]{m}}\right).
\]

We have discussed the case in which $k$ is close to $m$ and in which $k$ is fixed and $m$ is large. What happens when $k$ lies in between? We will now present a framework to obtain better bounds. We assume that the reader is familiar with probabilistic arguments.

We are given a set $X$ and a family of $m$ subsets $\{A_1,\ldots,A_m\}$. We work under the assumption that that each set $A_i$ satisfies $|A_i|\geq \beta\cdot |X|$, and at the end we determine the required value of $\beta$. If we construct a set $Y$ by taking $k$ elements uniformly and independently from $X$, the probability that $Y$ does not intersect a set $A_i$ is $(1-\beta)^k$. So by the union bound, the probability of $Y$ not being a hitting set is bounded by $m(1-\beta)^k$. If this quantity is less than one, then the probability of getting a hitting set is positive and we are done. So it suffices to set $\beta>1-\frac{1}{\sqrt[k]{m}}$. But this is asymptotically the same bound that we got using Rado's centerpoint theorem.

To improve this value, we allow some ``blemishes'' to happen. Instead of constructing $Y$ with $k$ random elements, we only sample $k-\ell$ for some $\ell\in[k]$. We work with an arbitrary value of $\ell$ and at the end we modify it to optimize our argument. The probability that $Y$ hits a set is $1-(1-\beta)^{k-\ell}$, and thus in expectation we hit $m(1-(1-\beta)^{k-\ell})$ sets. If 

\begin{align}
\label{ecExp}
m(1-(1-\beta)^{k-\ell})>m-\ell-1,
\end{align}

\noindent then there is an instance of $Y$ that hits at least $m-\ell$ sets and by (deterministically) choosing one point from the remaining $\ell$ sets we get a hitting set for $\mathcal{A}$ with $k$ elements.

So if we fix $m$ and $k$, we would like to apply the argument above to the value of $\ell$ whose solution in $\beta$ is minimal. In general, this raises a difficult optimization problem. But let us illustrate how the argument can improve our estimations when $k$ is $m/2$.

\begin{proof}[Proof of Theorem \ref{thmMed}]
  We have $m=2k$. We use the strategy above for $\ell=0.37 k$. We have to show that inequality (\ref{ecExp}) holds for $\beta=1-\frac{1}{\sqrt[k]{15}}$. Notice that it is equivalent to:
	
	\[
		\frac{\ell+1}{m}> (1-\beta)^{k-\ell}
	\]
	
	On the left hand side we have a quantity larger than $$\frac{\ell}{2k}=\frac{0.37k}{2k}> 0.185.$$
	
	On the right hand side we have $$\left(\frac{1}{\sqrt[k]{15}}\right)^{0.63k}=\frac{1}{15^{0.63}}<0.182.$$
	
	This finishes the proof

\end{proof}


\section{A Helly-type fractional transversal theorem}
\label{secApp}

One possible direction to generalize Helly's theorem is to find hypothesis that guarantee a hyperplane transversal through all the sets. Hadwiger's celebrated theorem \cite{Hadwiger1957} states that if we have a labeled family of convex sets $\mathcal{F}=\{F_1,\ldots,F_n\}$ on the plane and for each ordered triple $i<j<k$ there is a line transversal to $F_i$, $F_j$, $F_k$ in that order, then there is a line transversal to all the sets of $\mathcal{F}$. This theorem was subsequently generalized to higher dimensions \cite{Pollack1989,JacobE.Goodman1988} in terms of order types and furthermore it has even been generalized to colorful versions \cite{Holmsen2016,Arocha2009,Arocha2008}.

With non-order-type transversal hypothesis usually the best that one can expect is a hyperplane transversal to many but not all of the sets of the family. One of the possible directions is to require that each $k$ sets of the family have a common transversal (the \textit{$T(k)$ property}). Under this assumption, Katchalski and Liu \cite{katchalski1980} showed on the plane the existance of fractional hyperplane transversals, that is, a transversal through a positive fraction of the members of the family. The $T(3)$ property on the plane has been widely studied on the plane and an account of the most recent contributions can be found in the nice survey by Holmsen \cite{Holmsen2013} and the references therein.

Another possibility to obtain fractional transversals is to require a $k$-intersecting hypothesis. For $k=2$ a standard projection argument shows that any pairwise and finite family of convex sets has in every direction a transversal hyperplane to all the sets. The following result is trade-off variant: the hyperplanes intersect only a positive fraction of the sets, but they all go through a common point.

\begin{proposition}
\label{propTrans}
  Let $d$ be a positive integer and $k$ an integer in $\{2,\ldots,d+1\}$. Let $\mathcal{F}$ a finite family of $k$-intersecting convex sets in $\mathbb{R}^d$. Then there exists a point such that any hyperplane through it is transversal to at least $\alpha_{d,k}\cdot|\mathcal{F}|$ sets of $\mathcal{F}$.
\end{proposition}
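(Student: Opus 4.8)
The plan is to reduce the statement to a direct application of the definition of $\alpha_{d,k}$, once we pin down precisely how the transversality count for a hyperplane through a point $p$ relates to $\dep_\mathcal{F}(p)$. First I would invoke the definition of $\alpha_{d,k}$ to produce a point $p$ with $\dep_\mathcal{F}(p)\geq \alpha_{d,k}\cdot|\mathcal{F}|$. The whole task then becomes showing that every hyperplane through this particular $p$ is transversal to at least $\dep_\mathcal{F}(p)$ sets.

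Fix a hyperplane $\Pi$ through $p$ with unit normal $u$, and let $H^+,H^-$ be the two closed halfspaces it bounds, both of which contain $p$. The elementary observation is that a convex set $F$ is transversal to $\Pi$ exactly when $F$ meets both $H^+$ and $H^-$: one direction is immediate since a point of $F\cap\Pi$ lies in both halfspaces, and the converse follows by taking a point of $F$ in each halfspace and applying the intermediate value theorem to $\langle u,\cdot\rangle$ along the connecting segment, using convexity of $F$. From Definition \ref{defDepth}, each of $H^+$ and $H^-$ meets at least $\dep_\mathcal{F}(p)$ sets, but naively intersecting these two subfamilies only yields a transversal count of about $2\dep_\mathcal{F}(p)-|\mathcal{F}|$, which is too weak.

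The key step---and the place where the hypothesis $k\geq 2$ enters---is to upgrade one of the two halfspaces so that it meets \emph{all} of the sets. Projecting the family onto the line spanned by $u$ turns each $F$ into an interval, and since $\mathcal{F}$ is in particular pairwise intersecting these intervals pairwise intersect; by Helly's theorem on the line they share a common point $c^\ast$, that is, the hyperplane $\{\langle u,\cdot\rangle=c^\ast\}$ is transversal to every member of $\mathcal{F}$. Assuming without loss of generality that $\langle u,p\rangle\geq c^\ast$ (the other case is symmetric), this common hyperplane lies inside $H^-$, so $H^-$ already meets every set of $\mathcal{F}$. Consequently a set is transversal to $\Pi$ if and only if it meets $H^+$, and therefore the number of transversal sets equals the number of sets meeting $H^+$, which is at least $\dep_\mathcal{F}(p)\geq\alpha_{d,k}\cdot|\mathcal{F}|$. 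Minimizing over the choice of $\Pi$ gives the claim.

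I expect the main obstacle to be precisely this collapsing argument: recognizing that the two-sided transversality condition, which gives only the weak $2\dep_\mathcal{F}(p)-|\mathcal{F}|$ estimate in general, becomes one-sided once the one-dimensional Helly property forces one halfspace to be ``full''. Some care is also needed at the boundary case $\langle u,p\rangle=c^\ast$ and when the set of common points is an interval rather than a single point, but neither affects the count. This analysis also explains why the statement is restricted to $k\geq 2$: for $k=1$ there is no intersection hypothesis, the projected intervals need not share a point, and the reduction fails.
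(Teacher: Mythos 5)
Your proof is correct and rests on the same core idea as the paper's: take a point $p$ of depth at least $\alpha_{d,k}\cdot|\mathcal{F}|$ and use the pairwise-intersection property (the only place $k\geq 2$ enters) to rule out having members of $\mathcal{F}$ strictly on both sides of a hyperplane through $p$. The paper packages this as a two-line contradiction (if $\Pi$ were transversal to too few sets, each open halfspace would wholly contain a member, yielding two disjoint sets), whereas you argue directly via the one-dimensional Helly observation that every direction admits a hyperplane transversal to all of $\mathcal{F}$ --- a fact the paper itself records just before the proposition --- so the two write-ups are essentially equivalent.
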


\begin{proof}
	Let $p$ be a point with depth at least $\alpha_{d,k}\cdot |\mathcal{F}|$. This is the required point. Indeed, suppose that it is not. Then there exists a hyperplane $\Pi$ through $p$ that is transversal to less than $\alpha_{d,k}\cdot |\mathcal{F}|$ sets of the family. But then, by the definition of $\alpha_{d,k}$, each open halfspace defined by $\Pi$ must contain a set from $\mathcal{F}$. By taking one set on each side we get two sets that do not intersect. Since $k\geq 2$, this is a contradiction to the $k$-intersecting hypothesis.
\end{proof}

As a corollary to Proposition \ref{propTrans} and the bounds on $\alpha$, we obtain Theorem \ref{thmTHelly}, a Helly-type result for fractional transversals. In comparison to previous results, no such conclusion can be obtained from the $T(k)$ property. Indeed, if we take $k$ points in a line they satisfy the $T(k)$ property, but for most directions a hyperplane can only contain one of these points.

We end this section with a cute application: an alternative proof for Theorem \ref{thmHolmsen}. A key ingredient in the original proof \cite[Lem. 3]{Holmsen2013} is to see that under the hypothesis there is a pairing of the elements of $\mathcal{F}$ and a point $p$ such that $p$ is in the convex hull of $A\cup B$ for each paired sets $A$ and $B$. This is an interesting partition result and its proof requires a careful and non-trivial analysis of cases. But we can shortcut around it by replacing $p$ with a deep enough point.

\begin{proof}
We claim that under the hypothesis there is a point $p$ with depth at least $\frac{1}{2}\cdot|\mathcal{F}|$ with respect to $\mathcal{F}$. Indeed, if this was not the case by Proposition \ref{propSimplex} we would have three non-intersecting halfspaces $H_1$, $H_2$, $H_3$ each containing strictly more than $\frac{1}{2}\cdot|\mathcal{F}|$ sets of $\mathcal{F}$. Then $H_1\cap H_2$, $H_2\cap H_3$, $H_3\cap H_1$ each contains at least one set of $\mathcal{F}$. By taking one set on each of these intersections we contradict triples hypothesis.

We proceed as in the original proof (see \cite[Proof of Thm. 1]{Holmsen2013}) and define for each $X$ in $\mathcal{F}$ the double cone $\{L_A\}$, the family $M$ and the pair $A$, $B$ of sets in $\mathcal{F}$ whose cones have maximal angular distance. 

Now we refer the reader to \cite[Fig. 5]{Holmsen2013}. The half-plane defined by $\ell_2$ that does not contain $A$ and $B$ must intersect at least half of the sets of $\mathcal{F}$. But none of these sets can be contained in any of the regions defined by the lines $\ell_i$, since it would violate either the maximality of $A$ and $B$, the minimality of $M$ or the tight triple hypothesis. Therefore, at least $\frac{1}{2}$ of the sets are intersected by the lines $\ell_i$ and by the pidgeon-hole principle one of these lines must intersect $\frac{1}{8}\cdot|\mathcal{F}|$ elements of $\mathcal{F}$.

\end{proof}


\section{Discussion and open problems}
\label{secDiscu}

\begin{itemize}
	\item We have provided a probabilistic framework to give upper bounds for $\beta_{m,k}$, and thus lower bounds for $\alpha_{d,k}$. The main open problem that we leave is the following.
	
	\begin{problem}
	Give a detailed analysis on the results of this method depending on the asymptotic relation between $m$ and $k$.
	\end{problem}	
	\item There has been some interest in constructing centerpoints and center regions for Tukey depth algorithmically. See for example Agarwal et al. \cite{Agarwal2004} and the references therein. Definition \ref{defDepth} deals with more complicated objects, so in principle one would expect that the algorithms for finding centerpoints and center regions are more complex. In view of this, we raise the following problem.
	
	\begin{problem}
		What role play the properties of the family $\mathcal{F}$ in the complexity of finding centerpoints or center regions for Definition \ref{defDepth}?
	\end{problem}
	
	\item There are other results in the literature that implicitly relate the intersection patterns of a family of convex sets with an underlying notion on depth. For example, Rousseeuw and Hubert \cite{Rousseeuw1999} conjectured that in any arrangement of $n$ hyperplanes in general position in $\mathbb{R}^d$ there exists a point such that any ray that starts on it intersects at least $\cei{n/(d+1)}$ hyperplanes. This was later proved by Amenta et al. \cite{Amenta2000}. Note that such an arrangement of planes is $d$-intersecting but as far as we know, there is no evident relation between this problem and points with high depth for Definition \ref{defDepth}.
	\item Another problem with a similar flavor was posed by Jorge Urrutia: What is the smallest number $t=t(n)$ such that for any family of $n$ \textit{pairwise disjoint} segments on the plane there exists a point such that any ray that starts on it intersects at most $t$ segments? Fulek et al. \cite{Fulek2009} studied a more general version of the problem and in particular showed that the value of $t(n)$ is roughly $\frac{2n}{3}$.
\end{itemize}

\section{Appendix: Other proofs}
\label{secSProofs}

\begin{proof}[Proof of Proposition \ref{propLim}]
	Since $\mathcal{F}$ is a finite family of bounded sets, its union is also bounded. Let $B$ be a closed ball centered at the origin and that strictly contains $\cup \mathcal{F}$, and call its radius $r$. If $||x||>r$, then by the separation theorem there is a hyperplane $H$ that separates $x$ and $B$. The halfspace defined by $H$ that contains $x$ intersects no set from $\mathcal{F}$. Therefore, $\dep_\mathcal{F}(x)=0$.
\end{proof}

\begin{proof}[Proof of Proposition \ref{propDec}]
	By symmetry and by Theorem \ref{thmRCen}, each $r$-center with respect to $\mathcal{F}$ is a symmetric convex set centered at the origin, and in particular it contains the origin. Consider any positive real number $b$ and a real number $a$ in $[0,b]$. Both $0$ and $bx$ are in the $D(b)$-center with respect to $\mathcal{F}$. By convexity, $ax$ is also in the $D(b)$-center. This implies $D(a)\geq D(b)$, as desired.	
\end{proof}

\begin{proof}[Proof of Proposition \ref{propTukey}]
\begin{itemize}
	\item This part follows from the definitions and from the fact that a halfspace intersects a singleton if and only if it contains its corresponding point.
	\item We fix a set $\mathcal{S}$ such that $|S\cap F_i|=1$ for every index $i\in [n]$. We know that each halfspace that contains $p$ contains at least $\tdep_S(p)$ points of $S$, so in particular it intersects $\tdep_S(p)$ sets of $\mathcal{F}$. This shows that $\dep_\mathcal{F}(p)\geq \tdep_S(p)$. By taking the supremum on both sides of this inequality over all such sets $S$ we get the desired result.
\end{itemize}
\end{proof}

\section{Acknowlegments}
The project leading to this application has received funding from European Research Council (ERC) under the European Union’s Horizon 2020 research and innovation programme under grant agreement No. 678765 and from the Israel Science Foundation under grant agreement No. 1452/15.

\bibliographystyle{spmpsci}      
\bibliography{CPHelly}   
   
\end{document}